\newtheorem{thm}{Theorem}
\newtheorem{lem}[thm]{Lemma}
\newtheorem{prop}[thm]{Proposition}
\theoremstyle{definition} 
\newtheorem{defn}{Definition}[section]
\theoremstyle{remark}
\theoremstyle{plain}
\def\NN{{\mathbb N}}
\def\RR{{\mathbb R}}
\def\ZZ{{\mathbb Z}}
\def\scrU{{\mathcal U}}
\def\dim{\operatorname{dim}}
\def\SL{\operatorname{SL}}
\def\Stab{\operatorname{Stab}}
\def\Onder#1#2#3#4#5{#1 \setbox0=\hbox{$#1$}\setbox1=\hbox{$#2$}
       \dimen0=.5\wd0 \dimen1=\dimen0 \dimen2=\dp0 \dimen3=\dimen2
       \advance\dimen0 by .5\wd1 \advance\dimen0 by -#4
       \advance\dimen1 by -.5\wd1 \advance\dimen1 by -#4
       \advance\dimen2 by -#3 \advance\dimen2 by \ht1
       \advance\dimen2 by 0.3ex \advance\dimen3 by #5
        \kern-\dimen0\raisebox{-\dimen2}[0ex][\dimen3]{\box1}
       \kern\dimen1}
\newcommand{\R}{\mathbb{R}}
\newcommand{\Z}{\mathbb{Z}}
\newcommand{\ve}{\varepsilon}
\newcommand\defeq{:=}
\title{Simultaneous Diophantine approximation --- logarithmic  improvements}
\author{Alexander Gorodnik}
\author{Pankaj Vishe}
\begin{document}
\maketitle
\begin{abstract}
This paper is devoted to the study of a  problem of Cassels in multiplicative Diophantine approximation which involves minimising values of a product of 
affine linear forms computed at integral points. It was previously known that values
of this product become arbitrary close to zero, and we establish that,
in fact, they approximate zero with an explicit rate.
Our approach is based on investigating quantitative density of orbits
of higher-rank abelian groups.
	
\end{abstract}

\section{Introduction}

Let $\left< u \right>$ denote the distance of the real number $u$ to the nearest integer.
The sequence $\left< qu\right>$ with $q\in \mathbb{N}$ reflects how well $u$ is approximated by
rational numbers. In particular, it is well-known that for every $Q\ge 1$ one can find  $q\le Q$ such that
$\left< q u\right>\le 1/Q$, but there is a large set of numbers $u$ satisfying $\left< q u\right>\ge
c(u)/q$ for all $q$'s with some $c(u)>0$. 
The long-standing Littlewood conjecture concerns 
simultaneous approximation of
a pair of real numbers $u,v\in \mathbb{R}$. It asserts that
\begin{equation}\label{eq:lit1}
\liminf_{q\to\infty} q\left< q u \right>\left< q v \right>=0
\end{equation}
 holds for all $u,v\in\RR$.
This paper deals with the inhomogeneous version of this problem,
namely, whether the following relation
\begin{equation}\label{eq:lit2}
\liminf_{|q|\to\infty} |q|\left< q u -\alpha \right>\left< q v -\beta\right>=0
\end{equation}
holds for $u,v,\alpha,\beta\in\mathbb{R}$. 
In this setting Cassels asked (see \cite[p.~307]{C}) whether there exists a pair $(u,v)$ for which the property (\ref{eq:lit2}) holds for all real numbers 
$\alpha,\beta$. 
This question was answered affirmatively by Shapira in \cite{SC}
who showed that this is true for almost all pairs $(u,v)$.
He also gave an explicit example of a family of 
algebraic numbers $(u,v)$ satisfying this property,
and showed that it fails if $u$ and $v$ are rationally dependent.

It is natural to ask whether the Littlewood conjecture (\ref{eq:lit1})
and its inhomogeneous version
(\ref{eq:lit2}) admit quantitative improvements. It follows from the results of Gallagher \cite{Gal}
that for almost every $(u,v)\in\mathbb{R}^2$,
\begin{equation}\label{eq:qlit1}
\liminf_{q\to\infty} \;(\log q)^2 q\left< q u \right>\left< q v \right>=0.
\end{equation}
Peck \cite{Peck} showed if $1,u,v$ form a basis of a real cubic field, then
\begin{equation}\label{eq:qlit2}
\liminf_{q\to\infty} \;(\log q) q\left< q u \right>\left< q v \right><\infty.
\end{equation}
Pollington and Velani \cite{PV} proved that (\ref{eq:lit1}) holds with an additional $\log q$ factor for a large set of 
pairs $(u,v)$, and Badziahin and Velani \cite{BV} conjectured that
(\ref{eq:qlit2}) holds for all real numbers $u$ and $v$. 

Unlike in the {\em homogeneous} setting, literature on quantitative results in the {\em inhomogeneous} setting has been 
lacking. An old argument of 
Cassels readily implies that  for almost all $(u,v,\alpha,\beta)\in \R^4$,
$$
\liminf_{q\to\infty} (\log q)^2 q\left< q u -\alpha \right>\left< q v -\beta\right>=0
$$
(see, for instance, \cite[Theorem 3.3]{H}). 
The case with $\alpha=0$ was investigated by
Haynes, Jensen and Kristensen in \cite{HJK}.
They proved that for all badly aproximable $u$, and $v$ contained in 
a set of badly approximable numbers of full Hausdorff dimension depending on $u$,
$$
\liminf_{q\to\infty} (\log q)^{1/2-\epsilon} q\left< q u \right>\left< q v -\beta\right>=0 \quad\hbox{ with any } \epsilon>0
$$
holds for all $\beta$.
Setting $\alpha=0$ allowed in  \cite{HJK} to use tools developed in \cite{PV}, but
it seems unlikely that this approach could be applied 
when $\alpha$ is non-zero. 

Apart from these results, no other quantitative improvements
of the inhomogeneous property (\ref{eq:lit2}) are known to us. 
The aim of this paper is to establish the first
quantitative improvement of \eqref{eq:lit2}
with arbitrary $\alpha,\beta$.
In contrast with the existing analytical methods, dynamical ideas employed 
in this paper enable us to successfully deal with general
$\alpha,\beta$ at a cost of a weaker logarithmic saving.
The following theorem is a quantitative refinement of one of the main results from \cite{SC}.

\begin{thm} \label{th:main0}
There exists $\delta>0$ such that for almost all $(u,v)\in \mathbb{R}^2$,
$$
\liminf_{|q|\to\infty} \;(\log_{(5)} |q|)^\delta |q|\left< q u -\alpha\right>\left< q v -\beta\right>=0
$$
holds for all  $\alpha,\beta\in \mathbb{R}$. Here
  $\log_{(s)}$ denotes the $s$-th iterate of the function $\max (1,\log|x|)$.
\end{thm}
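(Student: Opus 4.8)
The plan is to translate the arithmetic statement into a statement about the quantitative density of orbits of the diagonal group $A = \{\diag(e^{t_1},e^{t_2},e^{-t_1-t_2})\}$ acting on the space $X = \ASLZ^{\,3}\backslash\ASLR^{\,3}$ of unimodular affine lattices in $\R^3$ (or, more precisely, on the relevant homogeneous space adapted to the inhomogeneous problem, as in Shapira's work \cite{SC}). A pair $(u,v)$ gives rise to a lattice $\Lambda_{u,v}$, and the quantity $|q|\left< qu-\alpha\right>\left< qv-\beta\right>$ measures how close a point of the shifted lattice $a_t \Lambda_{u,v} + \vecb_{\alpha,\beta}$ comes to the origin, where $a_t$ runs through a suitable one-parameter (in fact multi-parameter) subgroup of $A$. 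The qualitative result of Shapira says that $\liminf = 0$, which amounts to saying that for almost every $(u,v)$ the $A$-orbit of $\Lambda_{u,v}$ enters, for every $\veceps>0$, the set of affine lattices having a nonzero point in a box of volume $\veceps$ around the origin. To get the explicit rate $(\log_{(5)}|q|)^{-\delta}$ one must make this entry \emph{effective}: one needs that by time $T$ the orbit has visited the $\veceps$-target with $\veceps$ allowed to shrink like a fixed negative power of $\log_{(4)} T$ (four iterated logs, so that in the $q$-variable, $\log|q|\asymp T$, one loses one more log and lands at $\log_{(5)}$).

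The key steps, in order, are as follows. \textbf{Step 1.} Reduce Theorem~\ref{th:main0} to a quantitative recurrence/density statement for the $A$-action on $X$: show it suffices to prove that for a.e.\ base point $x\in X_0$ (the locus of lattices $\Lambda_{u,v}$), and for all large $T$, the piece of orbit $\{a_t x : \|t\|\le T\}$ comes within distance $\eta(T)$ of the ``small-covolume-shell'' cusp neighbourhoods, with $\eta(T)$ decaying like an inverse power of $\log_{(4)}T$, uniformly in the inhomogeneous shift. \textbf{Step 2.} Establish an effective equidistribution / mixing input for the one-parameter subgroups of $A$ together with the unipotent directions transverse to $A$: this is where one uses spectral gap for $\ASLR$ (or $\SLR$) and an $L^2$-flattening or thickening argument to convert single-orbit behaviour into average behaviour over a small transversal, paying a controlled power-of-log loss at each stage. \textbf{Step 3.} Run an iterated ``cascade'' or bootstrapping scheme: approximate density at scale $\veceps$ using density at scale $\veceps'$ slightly larger, with the number of iterations needed growing like $\log(1/\veceps)$; because each iteration multiplies the required time by a bounded factor, after $k$ iterations the time is exponential in $k$, which is exactly the mechanism producing the tower of logarithms (each level of the cascade contributing one $\log$). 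Tracking constants carefully through $k\asymp \log_{(3)}T$ or so iterations yields the $\log_{(5)}$ in the final bound. \textbf{Step 4.} Handle the inhomogeneous shift $(\alpha,\beta)$ uniformly: here one works on the affine-lattice space $\ASLZ\backslash\ASLR$ rather than $\SLZ\backslash\SLR$, and the point is that the $A$-orbit of the \emph{linear} part already equidistributes effectively, so the shift only perturbs the target box and can be absorbed, which is precisely the advantage of the dynamical method over the analytic approaches of \cite{PV,HJK} that are stuck at $\alpha=0$. \textbf{Step 5.} A Borel--Cantelli / measure-theoretic argument over the base establishes the ``almost all $(u,v)$'' conclusion from the quantitative recurrence estimate, with the null set independent of $\alpha,\beta$.

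The main obstacle I anticipate is \textbf{Step 3} --- controlling the loss of effectivity through the iterated bootstrapping. A naive iteration loses a constant \emph{factor} (not a constant power) in the scale at each step, which would only give a single logarithm, not a tower; conversely, too aggressive an iteration destroys the uniformity in the shift or exhausts the spectral gap. The delicate point is to arrange the cascade so that the scale parameter at level $j$ is $\veceps_j = \veceps_{j-1}^{1-\kappa}$ for a small fixed $\kappa$ (a genuinely multiplicative-in-the-exponent recursion), which forces $j \asymp \log(1/\veceps)$ levels and hence $T \asymp \exp(\exp(\cdots))$, and to verify that the spectral-gap-driven error terms at each level remain summable. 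Balancing the number of iterations against the accumulated error --- in effect optimising a tower-of-exponentials versus a geometric series of errors --- is the technical heart of the argument, and is what pins down the specific iterate $\log_{(5)}$ (rather than $\log_{(4)}$ or $\log_{(6)}$) and the existence, but not the explicit value, of $\delta>0$.
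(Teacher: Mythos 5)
Your Steps~1, 2, and~5 match the paper's skeleton: the dynamical reformulation (Proposition~\ref{p:dyn}), the use of exponential mixing plus a Borel--Cantelli argument to get quantitative recurrence for the $A$-action (Lemma~\ref{l: l2 bound} and Propositions~\ref{p:rec1}--\ref{p:rec2}), and the final measure-theoretic passage to ``almost all $(u,v)$''. But the technical core of the paper --- the mechanism that actually produces the iterated logarithm and handles arbitrary $(\alpha,\beta)$ --- is very different from your Step~3, and your proposal has a genuine gap there.

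The paper does \emph{not} run an iterated cascade/bootstrapping scheme in which a scale $\ve_j=\ve_{j-1}^{1-\kappa}$ is refined step by step. Instead, it fixes once and for all a point $x_0\in X$ whose $A$-orbit is compact (coming from an order in a totally real cubic field), and uses the recurrence result to bring the $A$-orbit of the linear part $\pi(y)$ into a neighbourhood of size $T^{-\beta}$ of $x_0$. Everything afterwards is a \emph{local} analysis near the compact fibre $\pi^{-1}(x_0)$, split into three cases according to the Diophantine type of the translation vector $w$ relative to the lattice $\Delta_{x_0}$: (1) $w$ Diophantine, where the tower of logarithms is imported wholesale from Z.~Wang's quantitative density theorem for higher-rank abelian toral actions, which gives $(\log_{(3)}Q)^{-\sigma}$-density of $B(Q^{k+2})$-orbits on the torus fibre; (2) $w$ near a torsion point of small order, handled by conjugating root-subgroup coordinates by elements of a finite-index stabiliser $B_1\subset B$ and invoking a Baker-type lower bound on linear forms in logarithms (Lemma~\ref{l:dioph eigenvalue}, Proposition~\ref{p:dioph alpha}); (3) $w$ near a torsion point of large order, reduced to the other two by a short perturbation. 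The exponent~$5$ in $\log_{(5)}$ then comes from concrete parameter juggling: $Q^{k+2}\asymp T^{\beta/(2d)}$, so $\log_{(3)}Q\asymp\log(\log_{(3)}R-\log k)$, and choosing $k=\log T/(\kappa\log\log T)$ turns Wang's three logs into five after passing from $T$ to $R=e^{(\nu+1)T}$. None of this resembles a self-improving cascade: Wang's theorem and Baker's theorem are black-box inputs, not iterates of the argument itself, and the iterated logarithm is already present in the statement of Wang's theorem rather than being assembled from scratch.

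Concretely, your proposal is missing the following essential ingredients, without which the argument cannot close: the choice of a compact $A$-orbit $x_0$ as the recurrence target (rather than generic shrinking targets); Z.~Wang's effective density theorem on the fibre torus $\pi^{-1}(x_0)$, which is what makes the inhomogeneous shift tractable for Diophantine~$w$; Baker's theorem on linear forms in logarithms, which is what controls the rational and near-rational~$w$; and the explicit root-coordinate decomposition (Section~\ref{sec:roots}) and conjugation-by-$B_1$ mechanism that collapses the perturbation $h$ in $a_0y=hy_0''$ down to a single root direction where Lemma~\ref{l:quant} applies. Your Step~4 intuition, that ``the $A$-orbit of the linear part already equidistributes effectively so the shift only perturbs the target box'', does not survive scrutiny: the shift is \emph{not} a small perturbation that can be absorbed; it is precisely the hard part, and the paper needs the full machinery of compact orbits, Wang, and Baker to deal with it. Finally, your heuristic that ``the cascade forces $T\asymp\exp(\exp(\cdots))$'' would, if it worked, tend to produce iterated exponentials in $T$ rather than iterated logarithms in the saving, so even at the level of bookkeeping the proposed mechanism does not obviously yield a $\log_{(5)}$-type bound. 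The proposal therefore captures the high-level strategy but not the argument.
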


We note that our method, in principle, could also allow establishing this result for specific
pairs $(u,v)$ provided that corresponding orbits satisfy a certain quantitative recurrence property. 

In a subsequent paper \cite{GV}, we also extend Theorem \ref{th:main0} to the $p$-adic setting
motivated by the $p$-adic version of the Littlewood conjecture proposed by
de Mathan and Teuli\'e \cite{MT}.

\vspace{0.2cm}

The setting of Theorem \ref{th:main0} can be considered as a particular case of
a general problem of multiplicative Diophantine approximation for
affine lattices (also called grids) in the Euclidean space $\RR^d$.
A grid in $\RR^d$ is a subset of the form
$$
\ZZ x_1+\cdots +\ZZ x_d +w,
$$
where $x_1,\ldots, x_d\in \RR^d$ are linearly independent and $w\in \RR^d$.
To formulate this problem explicitly, we set $N(v):=v_1v_2\cdots v_d$ for a vector $v={}^t (v_1,\ldots,v_d)$ in $\RR^d$.

\begin{defn}
	Let $\Lambda$ be a grid in $\RR^d$ and $h:\RR^+\to (0,1)$ a function such that $h(x)\to \infty$ as $x\to \infty$.
	\begin{enumerate}
		\item[(i)]  We say that $\Lambda$ is \emph{multiplicatively approximable} if $0$ is a non-trivial accumulation 
		point of a sequence $N(v_n)$ with $v_n\in \Lambda$.
		\item[(ii)] We say that $\Lambda$ is \emph{$h$-multiplicatively approximable} if there exists a sequence $v_n\in\Lambda$ 
		such that $v_n\to \infty$ and $0<|N(v_n)|<h(\|v_n\|)^{-1}$.
	\end{enumerate}
\end{defn}

We note that this provides a natural generalisation of property \eqref{eq:lit2}. Indeed, for $u,v,\alpha,\beta\in \RR$, we consider the grid
\begin{equation}
\label{eq:uv}
\Lambda(u,v,\alpha,\beta)\defeq\{{}^t(x,xu-y-\alpha,xv-z-\beta):\, x,y,z\in \ZZ\}.
\end{equation}
It is easy to check that if the grid $\Lambda(u,v,\alpha,\beta)$ is multiplicatively approximable, then \eqref{eq:lit2} holds.
Moreover, assuming that the function $h$ is non-decreasing, 
if the grid $\Lambda(u,v,\alpha,\beta)$ is $h$-multiplicatively approximable, then
$$
\liminf_{|q|\to\infty} h(c_1|q|-c_2) |q|\left< q u -\alpha \right>\left< q v -\beta\right>\le 1
$$
for some $c_1,c_2>0$.

It was also proved in \cite{SC} that for almost every lattice $\Lambda$ in $\R^d$, the grid $\Lambda+v$ is multiplicatively approximable for all $v\in \R^d$.
Here we establish a quantitative refinement of this result.

\begin{thm} \label{th:main}
There exists $\delta>0$ such that for almost every lattice $\Delta$ in $\R^d$, every grid $\Delta+w$, $w\in \R^d$, is $h$-multiplicatively approximable with 
$h(x)=(\log_{(5)} x)^\delta$.
\end{thm}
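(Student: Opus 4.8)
The strategy is to translate the Diophantine statement into a dynamical one on the space of affine lattices $\ASLASL$ acting by the diagonal group $A=\{\diag(e^{t_1},\dots,e^{t_d}):\sum t_i=0\}$, and then to prove a quantitative recurrence statement for almost every $A$-orbit. A grid $\Delta+w$ fails to be $h$-multiplicatively approximable precisely when, for all sufficiently large $\|v\|$, one has $|N(v)|\ge h(\|v\|)^{-1}$; translating through the action of $A$, the quantity $\log|N(av)|$ is invariant along the flow while $\log\|av\|$ grows linearly in the "depth" $\|\vect\|$ in the Weyl chamber, so $h$-approximability with $h(x)=(\log_{(5)}x)^\delta$ becomes the assertion that the $A$-orbit of the affine lattice point $(\Delta,w)$ enters, infinitely often along an unbounded set of times $\vect$ with $\|\vect\|\to\infty$, a shrinking target neighbourhood of the cusp whose size decays like an inverse power of $\log_{(4)}\|\vect\|$. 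So the first step is to set up this dictionary precisely, reducing Theorem~\ref{th:main} to a shrinking-target recurrence statement for the $A$-action on $\ASLASL$.

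Second, I would fix the lattice $\Delta$ (for a.e.\ $\Delta$) and treat $w$ as the variable, so that the relevant dynamical system is the action of $A$ on the fibre $\TT^d=\R^d/\Delta$ by the (non-abelian, but built from the abelian $A$) affine action; one needs the statement to hold for \emph{every} $w$, not almost every $w$, which is the crux of the inhomogeneous difficulty. The plan here is: (a) establish, for a.e.\ $\Delta$, a quantitative equidistribution / mixing estimate for the $A$-action on $\ASLASL$ — effective mixing of higher-rank diagonal flows with a power-savings or at least logarithmic rate, using either spectral gap input for $\SL(d,\R)$ together with the structure of the abelian action, or the effective equidistribution results available for unipotent/horospherical pieces combined with a non-divergence estimate; (b) use this to run a Borel–Cantelli type argument over a carefully chosen sparse family of times $\vect_n$ in the Weyl chamber with $\|\vect_n\|$ growing like a tower (to absorb the five iterated logs), showing that the "bad" set of $w$ for which the orbit misses all the targets has measure zero, and then (c) upgrade from "a.e.\ $w$" to "every $w$" by exploiting the homogeneity of the torus fibre: the set of $w$ for which the orbit is eventually trapped away from the cusp is, by a pigeonhole/covering argument on $\TT^d$ together with the recurrence already proved for a dense set of $w$, forced to be empty — this is the mechanism that replaces the Pollington–Velani machinery used in \cite{HJK} for the $\alpha=0$ case.

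Third, the bookkeeping with the iterated logarithms has to be done with care: the reason only $\log_{(5)}$ (rather than $\log$ or $(\log)^2$) appears is that each layer of the argument — passing from the rate in the mixing estimate, to the spacing of the return times, to the size of the shrinking targets, to the Borel–Cantelli summation, to the a.e.-to-every upgrade — costs one logarithm. I would organise the proof so that the quantitative recurrence lemma is stated with an explicit decay rate $\psi(\|\vect\|)$ for the target size, prove the dynamical lemma with $\psi$ as large as the method allows (presumably $\psi(T)=T^{-\kappa}$ or $(\log T)^{-\kappa}$ for some small $\kappa>0$ coming from the effective estimate), and then simply track how the four nested reductions degrade $\psi$ into $(\log_{(5)}\cdot)^{-\delta}$.

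\textbf{Main obstacle.} The hard part will be step (a)--(c) combined: obtaining a genuinely \emph{effective} equidistribution statement for the full-rank diagonal action that is strong enough to feed a Borel–Cantelli argument, and especially the a.e.-$w$-to-every-$w$ upgrade, since the effective estimates degrade near the cusp and the set of exceptional $w$ could a priori be a small but nonempty Cantor-type set; handling it requires quantitative non-divergence (in the spirit of Kleinbock–Margulis) uniform in $w$, together with the observation that the obstruction set is invariant under enough of the dynamics to force it to be null, hence — by a minimality or density argument on the compact fibre — empty.
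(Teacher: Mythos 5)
Your overall dictionary (reduce to a dynamical recurrence statement for the $A$-action on the space of grids, prove an $L^2$ estimate via effective mixing, run Borel--Cantelli over a sparse sequence of scales) does line up with one piece of the paper's argument: this is essentially Proposition~\ref{p:dyn}, Lemma~\ref{l: l2 bound} and Proposition~\ref{p:rec1}. But there are two concrete problems that leave the heart of the proof missing.

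First, the shrinking targets are not cuspidal neighbourhoods. The sets $\mathcal{W}(\vartheta,\ve)$ defined in the paper ask for a vector $v$ in the grid with $\|v\|$ \emph{bounded} and $0<|N(v)|<\ve$; and the recurrence statement one actually needs (Proposition~\ref{p:rec1}) sends the projected orbit $Ax\subset X$ into shrinking annular neighbourhoods $\mathcal{U}_{T^{-\beta}}(x_0)\setminus\mathcal{U}_{T^{-\beta}/2}(x_0)$ of a \emph{fixed} base point $x_0$, which is chosen so that $Ax_0$ is \emph{compact}. This is not a Kleinbock--Margulis logarithm-law setup: one recurs to a deep, compact region, not to the cusp, precisely because one wants to land near a point where the stabiliser $B=\Stab_A(x_0)$ is a cocompact lattice in $A$ and so acts richly on the torus fibre $\pi^{-1}(x_0)\cong V/\Delta_{x_0}$. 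Non-divergence uniform in $w$ would not give you this; the compact-orbit structure is not optional.

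Second — and this is the real gap — the step you label ``(c) upgrade from a.e.~$w$ to every~$w$'' is where all the work in the paper lives, and the mechanism you sketch (``pigeonhole/covering on $\TT^d$ plus invariance forces the bad set to be empty'') would not close. The bad set you are worried about is not obviously invariant under any subgroup acting minimally on the fibre, and density of good $w$'s does not transfer because the shrinking target depends on the scale. What the paper actually does (Proposition~\ref{p:supp}) is a local analysis near the compact orbit: once $a_0x$ is $T^{-\beta}$-close to $x_0$, one lifts to the grid level, identifies a translation vector $w$, and splits into three cases according to the Diophantine type of $w$ relative to $\Delta_{x_0}$. In the Diophantine case one invokes Wang's effective density theorem for the $B$-action on the torus fibre (Theorem~\ref{th:wang}), which is the genuine ``for every $w$'' input and is where the iterated logarithm first appears. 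In the near-rational case one passes to the finite-index stabiliser $B_1\le B$ of the nearby rational point, uses Baker's theorem (Lemma~\ref{l:dioph eigenvalue}, Proposition~\ref{p:dioph alpha}) to produce elements of $B_1$ with prescribed dilation along a single root direction, and contracts all but one root coordinate; the intermediate-period case is handled by a small perturbation that pushes $w$ into one of the two other regimes. None of this — the choice of a compact $A$-orbit, Wang's theorem, Baker's theorem, the case split, the root-subgroup contraction bookkeeping — appears in your plan, and your proposed replacement is too vague to assess. Your heuristic that ``each reduction costs one log'' is a reasonable guess, but the actual source is Wang's $(\log_{(3)}Q)^{-\sigma}$ rate composed with the optimisation of the auxiliary parameter $k(T)\asymp\log T/\log\log T$.

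So: right first reduction, right tool for recurrence in $X$ for a.e.~lattice, but the ``for every~$w$'' core is missing and the proposed substitute would not work as stated.
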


The paper is organised as follows. In the following section 
we set up required notation and give a dynamical reformulation of the problem, 
which reduces our investigation to the study of a quantitative recurrence property for orbits of a higher-rank abelian group $A$ acting on the space of grids in the Euclidean space.
However, it is not easy to establish this recurrence property directly,
so in Section 3, we first investigate quantitative recurrence in a smaller space ---
the space of lattices. In particular, it would be crucial in the proof to
establish recurrence to neighbourhoods of lattices with compact $A$-orbits. 
In Section 4, we discuss properties of compact orbits and relevant density results. 
Finally, in Section 5 we give a proof of the main theorems performing local analysis
in a neighbourhood of a grid whose corresponding lattice has compact $A$-orbit.

\subsection{Acknowledgements} The authors would like to thank S. Velani for suggesting the problem and for his 
encouragement during the work on the project. The first author 
was supported by ERC grant 239606, and
the second author was supported by EPSRC programme
grant EP/J018260/1.

\section{Preliminaries}

In this section we introduce some basic notation regarding dynamics on the space of grids in $\RR^d$ and give a dynamical reformulation of 
the above Diophantine approximation problem.
We also introduce a collection of roots subgroup that provides a convenient system
of local coordinates.

\subsection{Space of grids}
Let $G$ denote the group of unimodular affine transformations of $\RR^d$.
Let us set $G_0\defeq \SL(d,\RR)$ and $V\defeq \RR^d$. Then $G\simeq V \rtimes G_0$.
We also set $\Gamma_0 \defeq \SL(d,\ZZ)$ and $\Gamma \defeq \ZZ^d\rtimes \Gamma_0$.
Then $\Gamma_0$ is lattice in $G_0$, and 
$\Gamma$ is lattice in $G$. 
The space  $X\defeq G_0/\Gamma_0$ can be identified with the space
of unimodular lattices in $\RR^d$, and the space $Y \defeq G/\Gamma$ can be identified with the space 
of affine unimodular lattices, which are also called unimodular grids. 
For $x\in X$ we denote by $\Delta_x$ the corresponding lattice in $\RR^d$,
and for $y\in Y$, we denote by $\Lambda_y$ the corresponding grid. 
We denote by $\pi:Y\to X$ the natural factor map.
We observe that $\Lambda_y=\Delta_{\pi(y)}+w$ for some $w\in V$.
Moreover, $w$ can chosen to be uniformly bounded when $\pi(y)$ varies over bounded subsets of $X$.


\subsection{Dynamical reformulation of the multiplicatively approximable property}
We show that the multiplicatively approximable property can be reformulated
 in terms of dynamics of the group 
 $$
 A:=\{a=\hbox{diag}(a_1,\ldots,a_d):\, a_i>0\}
 $$ acting 
 on the space $Y$. More specifically, we show that the grid $\Lambda_y$ is
 $h$-multiplicatively approximable if the orbit $Ay$ visits certain shrinking subsets $\mathcal{W}({\vartheta,\ve})$ of $Y$.
Given $\ve,\vartheta>0$, we introduce the following non-empty open subsets of $Y$
\begin{align*}
\mathcal{W}({\vartheta,\ve})&\defeq \{y\in Y:\, \exists v\in
\Lambda_y \hbox{ such that } \|v\|< \vartheta\hbox{ and } 0<|N(v)|<\ve \}.
\end{align*}

We also denote by $\|\cdot\|$ the maximum norm on $\hbox{Mat}(d,\RR)$, and 
for a subset $S$ of $\hbox{Mat}(d,\RR)$, we set 
$$
S(T):=\{s\in S:\,\|s\|<T \}.
$$

\begin{prop}\label{p:dyn}
	Let $h$ be a nondecreasing function such that $h(x)\to \infty$ as $x\to\infty$.
	Suppose that  for $y\in Y$,
	\begin{equation}
	\exists\,  T_n\to \infty:\;\; A(T_n)y \cap \mathcal{W}(T_n,h(T_n^{d})^{-1})\ne \emptyset.
	 \tag{WR}
	\end{equation}
	Then the grid $\Lambda_y$ is $h$-multiplicatively appoximable.
\end{prop}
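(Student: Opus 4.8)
The plan is to convert the orbit-recurrence hypothesis (WR) into the existence of a suitable sequence of vectors lying in the single grid $\Lambda_y$, exploiting that $A$ preserves the product function $N$ and that elements of $A(T)$ have polynomially controlled inverses. First I would fix $n$ and use (WR) to choose $a\in A$ with $\|a\|<T_n$ and $ay\in\mathcal{W}(T_n,h(T_n^{d})^{-1})$. Since $A$ acts on $Y$ by the linear maps $\diag(a_1,\dots,a_d)$, we have $\Lambda_{ay}=a\Lambda_y$, so the definition of $\mathcal{W}$ supplies a vector of the form $av_n$ with $v_n\in\Lambda_y$, $\|av_n\|<T_n$ and $0<|N(av_n)|<h(T_n^{d})^{-1}$. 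As $a$ is unimodular, $N(av_n)=\big(\prod_i a_i\big)N(v_n)=N(v_n)$, and hence
\[
v_n\in\Lambda_y,\qquad 0<|N(v_n)|<h(T_n^{d})^{-1}.
\]

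Next I would bound $\|v_n\|$. From $\|a\|<T_n$ and $\prod_i a_i=1$ one gets, for the index $i_0$ realising $\min_i a_i$, that $a_{i_0}=\big(\prod_{j\neq i_0}a_j\big)^{-1}>T_n^{-(d-1)}$; equivalently $\|a^{-1}\|<T_n^{d-1}$. Since $a^{-1}$ is diagonal this yields $\|v_n\|=\|a^{-1}(av_n)\|\le\|a^{-1}\|\,\|av_n\|<T_n^{d-1}\cdot T_n=T_n^{d}$. Because $h$ is nondecreasing, $h(\|v_n\|)\le h(T_n^{d})$, so the display above upgrades to
\[
0<|N(v_n)|<h(T_n^{d})^{-1}\le h(\|v_n\|)^{-1}.
\]

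It then remains to arrange that, after passing to a subsequence, $\|v_n\|\to\infty$. If $\{\|v_n\|\}$ were bounded, then, $\Lambda_y$ being discrete, only finitely many vectors would occur among the $v_n$, so a fixed $v^{\ast}\in\Lambda_y$ would equal $v_n$ for infinitely many $n$; but then $0<|N(v^{\ast})|<h(T_n^{d})^{-1}$ for those $n$, and letting $n\to\infty$ (using $h(x)\to\infty$ and $T_n\to\infty$) forces $N(v^{\ast})=0$, a contradiction. Along a subsequence with $\|v_n\|\to\infty$, the vectors $v_n$ therefore satisfy $v_n\to\infty$ and $0<|N(v_n)|<h(\|v_n\|)^{-1}$, which is exactly the definition of $h$-multiplicative approximability of $\Lambda_y$.

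The argument is essentially bookkeeping and I do not anticipate a serious obstacle. The one step that genuinely needs care is the estimate $\|v_n\|<T_n^{d}$: it is here that one must use that $a$ is unimodular, not merely that $\|a\|<T_n$, and this is precisely what dictates the exponent $d$ appearing in the truncation $h(T_n^{d})^{-1}$ in hypothesis (WR). The only other mild point is the non-accumulation of the vectors $v_n$, handled above via discreteness of the grid together with $h\to\infty$.
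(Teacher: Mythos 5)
Your proof is correct and follows essentially the same route as the paper's: extract the vector $av_n$ from the definition of $\mathcal{W}$, use that $a$ is unimodular to get $N(av_n)=N(v_n)$, use $\|a\|<T_n$ together with $\prod_i a_i=1$ to bound $\|v_n\|<T_n^d$, and invoke that $h$ is nondecreasing to transfer the bound $h(T_n^d)^{-1}$ to $h(\|v_n\|)^{-1}$. The only cosmetic differences are that the paper bounds the coordinates $|v_i^{(n)}|<(a_i^{(n)})^{-1}T_n\le T_n^d$ directly rather than via $\|a^{-1}\|$, and that the paper asserts $v^{(n)}\to\infty$ from $0\ne N(v^{(n)})\to 0$ without spelling out the discreteness argument that you (correctly) made explicit.
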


\begin{proof}
It follows from our assumption that there exist sequences $a^{(n)}\in A(T_n)$
and $v^{(n)}\in \Lambda_y$ such that 
$$
|a_i^{(n)} v_i^{(n)}|<T_n\quad\hbox{ for all $i$},
$$
and 
$$
|N(a^{(n)}v^{(n)})|=|N(v^{(n)})|\in (0,h(T_n)^{-1}).
$$
This, in particular, implies that $0\ne N(v^{(n)})\to 0$,
so that $v^{(n)}\to \infty$.
We deduce from the first inequality that
$$
|v_i^{(n)}|<\left(a_i^{(n)}\right)^{-1} T_n=\left(\prod_{j\ne i}a_j^{(n)}\right) T_n\le T_n^d.
$$
Hence, $\|v^{(n)}\|\le T_n^d$, and
since $h$ is nondecreasing, we conclude that  	
$$
0<N(v^{(n)})<h(\|v^{(n)}\|)^{-1}.
$$
This proves that the grid $\Lambda_y$ is
$h$-multiplicatively appoximable.
\end{proof}

Proposition \ref{p:dyn} reduces study of the problem of multiplicative approximation
to analysing property (WR) --- the quantitative recurrence property of $A$-orbits
with respect to the sets $\mathcal{W}(\vartheta,\ve)$ in $Y$.

\subsection{Root subgroups}\label{sec:roots}
The crucial ingredient in understanding dynamics of the $A$-action
on the spaces $X$ and $Y$ are the root subgroups, which we now introduce.
The adjoint action of $A$ on the Lie algebra of $G$ is diagonalisable,
and we denote by $\Phi(G)$ the set of roots $A$ which is the set of 
non-trivial eigencharacters of $A$ appearing in this action.
For each $\alpha\in\Phi(G)$, there is a one-parameter root subgroup
$U_\alpha=\{u_\alpha(t)\}_{t\in\RR}\subset G$ such that
$$
au_\alpha(t)a^{-1}=u_\alpha(\alpha(a)t)\quad\hbox{for $a\in A$ and $t\in\RR$.} 
$$
More explicitly, the set of roots consists of 
$$
\hbox{$\alpha_{ij}(a)=a_ia_j^{-1}$ for 
$1\le i\ne j\le d$ and $\beta_i(a)=a_i$ for $1\le i\le d$. }
$$
The corresponding root subgroups are the groups of affine transformations
defined by 
$$
\hbox{$u_{ij}(t)u=u+ t u_j e_i$ and $v_i(t)u=u+t e_i$ }\quad\hbox{ for $u\in \RR^d,$}
$$
where $e_1,\ldots,e_d$ denotes the standard basis of $\RR^d$.
We denote the set of roots of the first type by $\Phi(G_0)$ and the set of roots 
of the second type by $\Phi(V)$.
With a suitable ordering, the product maps 
$$
A\times \prod_{\alpha\in \Phi(G_0)} \RR \to G_0:\; (a,t_\alpha:\alpha\in\Phi(G_0))\mapsto a\left(\prod_{\alpha\in\Phi(G_0)}u_\alpha(t_\alpha)\right),
$$
$$
\prod_{\alpha\in \Phi(V)} \RR \to V:\; (t_\alpha:\alpha\in\Phi(G_0))\mapsto \prod_{\alpha\in\Phi(V)}u_\alpha(t_\alpha),
$$
and 
$$
A\times \prod_{\alpha\in \Phi(G)} \RR \to G:\; (a,t_\alpha:\alpha\in\Phi(G))\to \left(\prod_{\alpha\in\Phi(V)}u_\alpha(t_\alpha)\right) a\left(\prod_{\alpha\in\Phi(G_0)}u_\alpha(t_\alpha)\right)
$$
are diffeomorphisms in neighbourhoods of the origins.
We set
\begin{align}
\mathcal{U}_{G_0}(\ve)&\defeq \{a\in A:\, \|a-e\|<\ve \}\cdot \prod_{\alpha\in\Phi(G_0) }\{u_\alpha(t_\alpha):|t_\alpha|< \ve\}, \nonumber\\
\mathcal{U}_V(\ve)&\defeq \prod_{\alpha\in\Phi(V) }\{u_\alpha(t_\alpha):|t_\alpha|< \ve\}, \label{eq:u} \\ \mathcal{U}_G(\ve)&\defeq \mathcal{U}_V(\ve)\,\mathcal{U}_{G_0}(\ve).\nonumber
\end{align}
Then $\mathcal{U}_{G_0}(\ve)$, $\mathcal{U}_{V}(\ve)$, and $\mathcal{U}_{G}(\ve)$
define neighbourhoods of identity in the groups $G_0$, $V$, and $G$ respectively.
We also consider the neighbourhoods of identity
\begin{align}
\mathcal{O}_{G_0}(\ve)&\defeq \{g\in G_0:\,\, \|g-e\|<\ve\}, \nonumber\\
\mathcal{O}_{V}(\ve)&\defeq \{v\in V:\,\, \|v\|<\ve\}, \label{eq:O}\\
\mathcal{O}_{G}(\ve)&\defeq \{(v,g)\in G:\,\, \|v\|<\ve,\, \|g-e\|<\ve  \}. \nonumber
\end{align}
It is easy to check that there exists $c_0>0$ such that for every $\ve\in (0,1)$,
\begin{equation}
\label{eq:inclusion}
\mathcal{U}_{G_0}(\ve)\subset \mathcal{O}_{G_0}(c_0\,\ve),\quad
\mathcal{U}_{V}(\ve)\subset \mathcal{O}_{V}(c_0\,\ve),\quad
\mathcal{U}_{G}(\ve)\subset \mathcal{O}_{G}(c_0\,\ve).
\end{equation}

\vspace{0.2cm}

While establishing quantitative recurrence of $A$-orbits to the sets $\mathcal{W}(\vartheta, \ve)$ is the crux of the proof of our main results,
it turns out that analogous recurrence property is easy to verify for 
the root subgroups. In fact, as an intermediate step in the proof, we will have to establish recurrence to smaller sets which are defined as
\begin{align*}
\mathcal{W}({\vartheta,\ve_1, \ve_2})&\defeq \{y\in Y:\, \exists v\in
\Lambda_y \hbox{ such that } \|v\|< \vartheta\hbox{ and } \ve_1 <|N(v)|<\ve_2 \}
\end{align*}
for $\vartheta>0$ and $0<\ve_1<\ve_2$.

\begin{lem}
\label{l:quant}
Let $\alpha\in \Phi(G)$.
For every $\ve_1,\ve_2\in (0,1)$, $\ve_1<\ve_2$, and $y\in Y$,
there exist positive $\vartheta=O_{\pi(y)}(1)$, positive $t_+=O_{\pi(y)}(1)$, and negative $t_-=O_{\pi(y)}(1)$ 
such that 
$$
u_\alpha(t_+)y\in \mathcal{W}(\vartheta,\ve_1,\ve_2)\quad\hbox{and}\quad u_\alpha(t_-)y\in \mathcal{W}(\vartheta,\ve_1,\ve_2).
$$
\end{lem}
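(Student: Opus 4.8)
The plan is to reduce the statement to an explicit one-parameter-group computation inside a single grid. Fix $\alpha\in\Phi(G)$ and $y\in Y$. Since $\Lambda_y=\Delta_{\pi(y)}+w$ and $w$ can be taken bounded in terms of $\pi(y)$, and since $\Delta_{\pi(y)}$ is a unimodular lattice lying in a bounded part of $X$, there is a basis $b_1,\dots,b_d$ of $\Delta_{\pi(y)}$ with all $\|b_i\|=O_{\pi(y)}(1)$; by Minkowski's theorem we may also assume $\Delta_{\pi(y)}$ contains a nonzero vector of norm $O(1)$, but what we actually need is weaker. The key observation is that the function $t\mapsto N\big(u_\alpha(t)v\big)$, for a fixed $v\in\Lambda_y$, is a polynomial in $t$ of degree at most $d$ (in fact degree $1$ when $\alpha=\alpha_{ij}$ and degree $0$ — so not useful — when $\alpha=\beta_i$ acts on the relevant coordinate; one should instead track how $u_\alpha(t)$ moves a well-chosen vector of $\Lambda_y$). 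So I would split into the two root types.

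For $\alpha=\alpha_{ij}\in\Phi(G_0)$: the unipotent $u_{ij}(t)$ fixes $\Lambda_y$ as a set after the coordinate shift, and acts on a vector $v=(v_1,\dots,v_d)\in\Lambda_y$ by $v_i\mapsto v_i+tv_j$, leaving the other coordinates fixed. Pick $v\in\Lambda_y$ with $v_j\neq 0$ and $\|v\|=O_{\pi(y)}(1)$ (such a $v$ exists: either some basis vector has nonzero $j$-th coordinate, or after the affine shift $w$ one does, and in the degenerate case one uses $v_i(\cdot)$ below). Then $N(u_{ij}(t)v)=\big(\prod_{k\neq i}v_k\big)(v_i+tv_j)$ is linear and nonconstant in $t$ with slope $\big(\prod_{k\neq i}v_k\big)v_j$ of size $O_{\pi(y)}(1)$ — provided $\prod_{k\ne i}v_k\ne 0$; if that product vanishes one must choose $v$ more carefully or pass to a perturbed vector, which is where a small amount of care is needed. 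Granting a genuinely nonconstant linear (or bounded-degree polynomial) function $p(t)=N(u_\alpha(t)v)$ with coefficients bounded in terms of $\pi(y)$, the image $p(\RR)$ contains both $(\ve_1,\ve_2)$ and $(-\ve_2,-\ve_1)$... actually it contains one of the two sign intervals and, running $t$ in the opposite direction, eventually enters $(\ve_1,\ve_2)$ again or its negative; in any case there exist $t_+>0$ and $t_-<0$ with $p(t_\pm)\in(\ve_1,\ve_2)$ up to sign, and since $|p'|$ is bounded below away from zero near such crossings by a quantity $\asymp_{\pi(y)}1$ and $|p|$ itself was $O_{\pi(y)}(1)$ at $t=0$, these $t_\pm$ satisfy $|t_\pm|=O_{\pi(y)}(1)$. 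Simultaneously $\|u_\alpha(t_\pm)v\|\le\|v\|+|t_\pm|\,\|v\|=O_{\pi(y)}(1)=:\vartheta$, so $u_\alpha(t_\pm)y\in\mathcal W(\vartheta,\ve_1,\ve_2)$.

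For $\alpha=\beta_i\in\Phi(V)$: now $v_i(t)$ translates the whole grid, $v_i(t)\Lambda_y=\Lambda_y+te_i$, so taking any bounded $v\in\Lambda_y$ we get $N(v_i(t)v)=\big(\prod_{k\neq i}v_k\big)(v_i+t)$, again linear in $t$ with slope $\prod_{k\ne i}v_k$; choose $v$ so this slope is nonzero and $O_{\pi(y)}(1)$ (possible since $\Lambda_y$ spans an affine hyperplane in no coordinate direction once we allow the shift $w$, or again perturb), and conclude exactly as above. The main obstacle, and the only place the argument is not completely mechanical, is the degenerate case where every conveniently bounded vector $v$ of $\Lambda_y$ has $\prod_{k\ne i}v_k=0$, i.e. lies on a coordinate hyperplane, making $N$ constantly zero along the orbit so that one never reaches the \emph{positive} lower bound $\ve_1<|N(v)|$. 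I would handle this by first applying a bounded root-subgroup element transverse to $U_\alpha$ (or using two different basis vectors of $\Delta_{\pi(y)}$, whose generic rational combination has all coordinates nonzero) to move to a vector off all coordinate hyperplanes while keeping norms $O_{\pi(y)}(1)$, and only then running the one-parameter computation above; all the resulting bounds remain $O_{\pi(y)}(1)$, which is exactly the uniformity claimed in the lemma. $\qed$
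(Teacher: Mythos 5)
Your approach is fundamentally the same as the paper's: reduce to the observation that $t\mapsto N(u_\alpha(t)v)$ is an affine-linear function of $t$ with coefficients that are products of coordinates of a bounded vector $v\in\Lambda_y$, and then solve the resulting inequalities. Where you are vague, the paper is precise, and the vagueness hides a genuine gap concerning the sign of $t_\pm$. You claim that ``running $t$ in the opposite direction, eventually enters $(\ve_1,\ve_2)$ again or its negative; in any case there exist $t_+>0$ and $t_-<0$.'' For a single nonconstant affine function $p(t)$ this is false: $p$ is monotone, so the set $\{t:|p(t)|\in(\ve_1,\ve_2)\}$ is the union of two intervals lying on either side of the unique zero $t_0=-p(0)/p'$, and when $|p(0)|\geq 1$ and $\ve_2<1$ (which is exactly the regime one is forced into once you demand $\prod_{k\neq i}|v_k|\geq 1$) both of those intervals lie on the same side of $0$, with sign determined by the sign of $v_i$ (resp.\ $v_i/v_j$). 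Thus a single choice of $v$ cannot produce both a positive $t_+$ and a negative $t_-$; the paper circumvents this by constructing the witness vector with a controlled sign of the relevant coordinate (picking $s\in\Delta_{\pi(y)}$ with $s_i<0$ and $s_k\neq0$ for all $k$ and setting $z=w+\ell s$), and then running a parallel argument with the opposite sign for the negative-$t$ case.

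Your handling of the degeneracy $\prod_{k\neq i}v_k=0$ is also not quite right in its first variant. ``Applying a bounded root-subgroup element transverse to $U_\alpha$'' to move the vector off the coordinate hyperplanes does not directly help, since such an element would change the grid (it takes you to $u_\alpha(t)gy$ for some $g\notin\Gamma$, not $u_\alpha(t)y$), whereas what you need is a better vector \emph{inside} the same grid $\Lambda_y$. Your parenthetical second variant (a lattice combination of basis vectors) is the correct idea, and it is essentially what the paper does with $z=w+\ell s$: choosing $\ell\in\NN$ large enough forces $|z_k|\geq1$ for all $k$ and pins down the sign of $z_i$ (and of $z_j$ in the $\alpha_{ij}$ case), which simultaneously kills the degeneracy, gives the explicit lower bound on the slope, and produces the correct sign of $t$. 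If you replace the perturbation hand-waving with this explicit lattice-shift construction and run it once for each sign of $t$, the proof closes up and coincides with the paper's.
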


\begin{proof}
We first note that the grid $\Lambda_y$ can be written as
$\Lambda_y=\Delta_{\pi(y)}+w$, where $w$ belongs to a fixed fundamental domain for the lattice $\Delta_{\pi(y)}$. In particular, $\|w\|=O_{\pi(y)}(1)$.

Let us show that $v_i(t)y\in \mathcal{W}(\vartheta,\ve_1,\ve_2)$ for some positive $\vartheta=O_{\pi(y)}(1)$ and some positive $t=O_{\pi(y)}(1)$.
Using that $\|w\|=O_{\pi(y)}(1)$ and adding a suitable vector
from the lattice $\Delta_{\pi(x)}$, one can show that there exists a vector $z\in \Lambda_y=w+\Delta_{\pi(y)}$ such that 
$$
\|z\|=O_{\pi(y)}(1),\quad\; z_i<0,\quad\; |z_k|\ge 1\;\;\hbox{ for all $k$.}
$$
Indeed, since $\Delta_{\pi(y)}$ is a lattice, there exists $s\in \Delta_{\pi(y)}$
satisfying $s_i<0$ and $s_k\ne 0$ for all $k$. Then we can choose $z$ of the form
$z=w+\ell s$ with a suitable $\ell \in \NN$.
We have to choose $t$ so that the inequalities
$$
\ve_1<|N(v_i(t)z)|<\ve_2
$$
hold.
Since $N(v_i(t)z)=N(z)+t N_i(z)$ where $N_i(z):=\prod_{j\ne i} z_i$, 
these inequalities are equivalent to 
$$
\ve_1|N_i(z)|^{-1}<|z_i+t|<\ve_2|N_i(z)|^{-1}.
$$
Hence, we can take $t$ from the interval $(\ve_1|N_i(z)|^{-1}-z_i, \ve_2|N_i(z)|^{-1}-z_i)$.
Due to our choice of $z$, we have $t>0$ and $t=O_{\pi(y)}(1)$.
Also, $\|v_i(t)z\|=O_{\pi(y)}(1)$.
Hence, it follows that $v_i(t)y\in \mathcal{W}(\vartheta,\ve_1,\ve_2)$ with some $\vartheta=O_{\pi(y)}(1)$ as required.
Similarly, one can also show that there exists negative $t$ satisfying $v_i(t)y\in \mathcal{W}(\vartheta,\ve_1,\ve_2)$.

The proof that $u_{ij}(t)y\in \mathcal{W}(\vartheta,\ve_1,\ve_2)$ for 
some positive $\vartheta=O_{\pi(y)}(1)$ and some positive $t=O_{\pi(y)}(1)$ follows similar lines.
Since $\|w\|=O_{\pi(y)}(1)$, we can add to $w$ a vector from the lattice $\Delta_{\pi(x)}$
to show existence of $z\in \Lambda_y=w+\Delta_{\pi(x)}$ satisfying
$$
\|z\|=O_{\pi(y)}(1),\quad\; z_i>0,\quad\; z_j<0,\quad\; |z_k|\ge 1\;\;
\hbox{ for all $k$.}
$$
Since $N(u_{ij}(t)z)=N(z)+t z_j N_i(z)$, the inequalities
$$
\ve_1<|N(u_{ij}(t)z)|<\ve_2
$$
are equivalent to 
$$
\ve_1|N_i(z)|^{-1}|z_j|^{-1}<|z_iz_j^{-1}+t|<\ve_2|N_i(z)|^{-1} |z_j|^{-1},
$$
so that we can take $t$ from the interval 
$(\ve_1|N_i(z)|^{-1}|z_j|^{-1}-z_iz_j^{-1}, \ve_2|N_i(z)|^{-1} |z_j|^{-1}-z_iz_j^{-1})$.
Then $t>0$ and $t=O_{\pi(y)}(1)$.
Also, it is clear that $\|u_{ij}(t)z\|=O_{\pi(y)}(1)$.
Hence, it follows that $u_{ij}(t)y\in \mathcal{W}(\vartheta,\ve_1,\ve_2)$ with some $\vartheta=O_{\pi(y)}(1)$.
The argument with negative $t$ is similar.
\end{proof}


\section{Quantitative recurrence estimates}
\label{sec:rec}

Quantitative recurrence plays an important role in the theory of Diophantine approximation. In particular, this connection was realised in 
Sullivan's work \cite{SU} and its subsequent generalization
\cite{KM} by Kleinbock and Margulis. While these papers deal with recurrence 
to shrinking cuspidal neighbourhoods, we have to investigate 
visits of $A$-orbits to shrinking neighbourhoods of specific points inside the space $X$.
The idea of our approach, which uses exponential mixing, is similar to \cite{KM},
but it will be essential to establish recurrence to neighbourhoods of particular shape
with respect to the root coordinate system introduced in Section \ref{sec:roots}.
Namely, we consider neighbourhoods of $x\in X$
defined by $\mathcal{U}_\ve (x)\defeq \mathcal{U}_{G_0} (\ve)x$, where 
$\mathcal{U}_{G_0}(\ve)$ is defined in \eqref{eq:u}.

The main goal of this section is to prove the following proposition.

\begin{prop} \label{p:rec1}
	Let $x_0\in X$ and $a_t$ be a non-trivial one-parameter subgroup of $A$.
	Then there exists a constant $\beta>0$, such that for almost every $x\in X$ and every $T>T_0(x)$, 
	$$
	a_t x\in \mathcal{U}_{T^{-\beta}}(x_0)\backslash \mathcal{U}_{T^{-\beta}/2}(x_0)\quad\hbox{ for some $t\in [0,T]$.}
	$$
	
\end{prop}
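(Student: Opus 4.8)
The plan is to reduce the statement to a quantitative equidistribution / Borel--Cantelli type argument using exponential mixing of the one-parameter flow $a_t$ on $X = G_0/\Gamma_0$ with respect to the Haar measure $m_X$. The key point is that for each fixed scale $\rho > 0$, the ``annular'' target $B_\rho(x_0) \defeq \mathcal{U}_\rho(x_0) \setminus \mathcal{U}_{\rho/2}(x_0)$ is an open set of measure comparable to a fixed power of $\rho$; more precisely, since $\mathcal{U}_{G_0}(\rho)$ is (up to the diffeomorphism of Section~\ref{sec:roots}) a box of side lengths comparable to $\rho$ in the $\dim G_0$ coordinates, one has $m_X(B_\rho(x_0)) \asymp \rho^{\dim G_0}$, uniformly in $x_0$ over compact sets (and in fact uniformly, since $X$ has finite volume and $x_0$ can be taken in a fixed compact core up to a harmless adjustment). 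I would set $\rho = \rho_T = T^{-\beta}$ for a small $\beta>0$ to be chosen, and $T$ ranging over a geometric sequence $T_k = 2^k$.

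First I would record the exponential mixing estimate: there exist $\kappa > 0$ and $\ell \in \NN$ such that for all $\phi, \psi \in C_c^\infty(X)$ and all $t \ge 0$,
$$
\left| \int_X \phi(a_t x)\,\psi(x)\, dm_X(x) - \Big(\int_X \phi\, dm_X\Big)\Big(\int_X \psi\, dm_X\Big)\right| \ll e^{-\kappa t}\, \mathcal S_\ell(\phi)\, \mathcal S_\ell(\psi),
$$
where $\mathcal S_\ell$ is an $L^2$-Sobolev norm; this is standard for $\SL(d,\RR)$ acting on $G_0/\Gamma_0$ by any nontrivial diagonal flow. Next, for the target $B_{\rho_T}(x_0)$ I would build smooth approximants $\chi^{\pm}_T$ with $\mathbf 1_{B_{\rho_T/2 \cdot \text{(shrunk)}}} \le \chi_T^- \le \mathbf 1_{B_{\rho_T}} \le \chi_T^+$, with $\int \chi_T^\pm \asymp \rho_T^{\dim G_0}$ and Sobolev norms $\mathcal S_\ell(\chi_T^\pm) \ll \rho_T^{-C}$ for some $C = C(d,\ell)$. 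Then, using a second smooth cutoff $\psi$ supported in a small ball around a generic starting point and integrating the mixing bound over a window of times, I would estimate the measure of the set of $x$ in that ball for which $a_t x \notin B_{\rho_{T_k}}(x_0)$ for \emph{all} $t \in [0, T_k]$: discretising $[0,T_k]$ into $\asymp T_k$ unit steps and using that a single visit has ``probability'' $\asymp \rho_{T_k}^{\dim G_0}$ against the mixing error $e^{-\kappa \cdot O(1)} \rho_{T_k}^{-C}$, a standard quasi-independence / second-moment computation (as in Kleinbock--Margulis \cite{KM}, Section on logarithm laws) shows this bad set has measure $\ll \exp(-c\, T_k\, \rho_{T_k}^{\dim G_0})$, provided $\rho_{T_k}^{\dim G_0} \gg$ the per-step error, i.e. provided $\beta \dim G_0 < 1$ and $\beta$ small relative to $\kappa/C$. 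Summing a smooth partition of unity over a compact exhaustion of $X$, the total bad measure at level $k$ is still summable in $k$ as soon as $T_k \rho_{T_k}^{\dim G_0} = 2^{k(1-\beta\dim G_0)} \to \infty$ fast enough (geometrically), so Borel--Cantelli gives: for a.e.\ $x$ and all large $k$, $a_t x \in B_{\rho_{T_k}}(x_0)$ for some $t \in [0,T_k]$.

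To pass from the sparse sequence $T_k$ to all $T > T_0(x)$, I would interpolate: given $T$, pick $k$ with $T_{k-1} \le T < T_k$; the visit guaranteed at scale $\rho_{T_k} = T_k^{-\beta}$ happens at some time $t \le T_k \le 2T$, which is not quite in $[0,T]$, and at scale $T_k^{-\beta}$ rather than $T^{-\beta}$. Both discrepancies are absorbed by choosing the geometric ratio and by a short additional flow: running $a_s$ for bounded time $s$ moves a point within $\mathcal U_{G_0}(O(e^{s}\cdot))$-controlled amount, and more simply one reruns the whole Borel--Cantelli argument with the target $B_{\rho_T}(x_0)$ and the time window $[0,T]$ but only for $T = T_k$, then notes monotonicity is \emph{not} automatic, so instead I would directly take $T_k$ close together, $T_k = (1+\eta)^k$, so that $T_k^{-\beta} \le T^{-\beta} \le (T_{k-1})^{-\beta} = (1+\eta)^{\beta} T_k^{-\beta} < 2 T_k^{-\beta}$ and $t \le T_k \le (1+\eta) T$; then a final bounded-time flow correction fixes the factor $(1+\eta)$ in both the radius and the time, at the cost of shrinking $\beta$ and enlarging $T_0(x)$. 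This yields $a_t x \in \mathcal U_{T^{-\beta}}(x_0) \setminus \mathcal U_{T^{-\beta}/2}(x_0)$ for some $t \in [0,T]$, as claimed.

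The main obstacle I anticipate is \emph{not} the existence of a visit to the ball $\mathcal U_{T^{-\beta}}(x_0)$ — that is the classical logarithm-law mechanism — but rather forcing the visit to land in the \emph{annulus} $\mathcal U_{T^{-\beta}}(x_0) \setminus \mathcal U_{T^{-\beta}/2}(x_0)$, i.e.\ guaranteeing the orbit does not stay too deep inside the smaller ball. Here one must use that the flow $a_t$ is \emph{expanding} transversally: a point at distance $< T^{-\beta}/2$ from $x_0$ is pushed outward (in the unstable/neutral directions of $a_t$) at an exponential rate, so it cannot remain in $\mathcal U_{T^{-\beta}/2}(x_0)$ for longer than $O(\beta \log T)$ units of time; combined with the fact that, once it first enters $\mathcal U_{T^{-\beta}}(x_0)$, it must cross the annulus on its way in or out, one extracts a time $t$ with $a_t x$ in the annulus. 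Making this crossing argument uniform and compatible with the probabilistic estimate — in particular ruling out that the orbit enters and exits $\mathcal U_{T^{-\beta}}(x_0)$ entirely ``through the cusp direction'' of the box without ever meeting the annular shell — is the delicate point, and is exactly where the specific box shape of $\mathcal U_{G_0}(\ve)$ in root coordinates (rather than a metric ball) is exploited.
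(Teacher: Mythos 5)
Your approach is essentially the paper's: exponential mixing gives an $L^2$ bound for the ergodic averages along $a_t$, which by Chebyshev controls the measure of the set of starting points whose orbit misses a smooth bump function supported in the annulus, and then Borel--Cantelli over a geometric sequence of times plus interpolation finishes. That is exactly the structure of the paper's proof (Lemma~\ref{l: l2 bound} followed by the Borel--Cantelli argument). Three remarks, however.

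First, the measure estimate you claim, $\exp(-c\,T_k\,\rho_{T_k}^{\dim G_0})$, is not what a second-moment computation yields and would require genuine quasi-independence across the time window. The paper only proves, and only needs, polynomial decay: for $x\in\Omega_T$ the ergodic average $A_T(f_T)(x)$ of the annular bump vanishes, so
$|\Omega_T|\le\|A_T(f_T)-1\|_2^2\ll T^{2c\beta-2\alpha}$, which for $\beta$ small is $\ll T^{-\epsilon}$. Summed over $T=2^k$ this already gives Borel--Cantelli. You should drop the exponential claim; it is both unjustified as stated and unnecessary.

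Second, your final paragraph about the ``main obstacle'' of landing in the annulus rather than the ball is a red herring, and the expansion/crossing argument you propose is not needed. You already set up the test function supported \emph{in the annulus}; the moment $A_T(f_T)(x)>0$ there is a visit to the annulus, period. No separate transversality or crossing argument is required, and the box shape in root coordinates plays no role here beyond making it easy to construct $f_T$ with $\operatorname{supp}(f_T)$ in the annulus, $\int f_T=1$, and $S(f_T)\ll T^{c\beta}$.

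Third, your interpolation via $(1+\eta)^k$ plus a ``bounded-time flow correction'' is more delicate than you acknowledge, since flowing for a bounded time does not simply rescale the annulus and can move the point out of it. The paper sidesteps this cleanly: define $\Omega_T$ using the slightly widened annulus $\mathcal{U}_{T^{-\beta}/2^\beta}(x_0)\setminus\mathcal{U}_{T^{-\beta}/2}(x_0)$, run Borel--Cantelli over $T=2^k$, and then observe that for $2^k\le T<2^{k+1}$ one has $[0,2^k]\subset[0,T]$ and
$\mathcal{U}_{2^{-k\beta}/2^\beta}(x_0)\setminus\mathcal{U}_{2^{-k\beta}/2}(x_0)\subset\mathcal{U}_{T^{-\beta}}(x_0)\setminus\mathcal{U}_{T^{-\beta}/2}(x_0)$,
so no correction step is required.
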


We denote by $\mu$ the normalised invariant measure on the space $X$
and consider a family of averaging operators
$$
A_T:L^2(X)\to L^2(X): f\mapsto \frac{1}{T}\int_0^{T} f(a_tx)dt.
$$
We begin by proving an $L^2$-estimate for the operators $A_T$.

\begin{lem} \label{l: l2 bound}
There exists $\alpha>0$ such that for every $T\ge 1$ and $f\in C^\infty_c(X)$, 
$$\left\|A_T(f)-\int_X f\,d\mu \right\|_{2}\ll T^{-\alpha}S(f),$$
where $S(f)$ denotes a suitable Sobolev norm.
\end{lem}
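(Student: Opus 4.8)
The plan is to deduce this decay estimate from the exponential mixing property of the $A$-action (equivalently, of the one-parameter diagonal flow $a_t$) on $X = \SL(d,\RR)/\SL(d,\ZZ)$, which is a standard consequence of the spectral gap for $\SL(d,\RR)$ acting on $L^2(X)$. Concretely, there is a constant $\kappa>0$ and a Sobolev norm $S(\cdot)$ such that for all $f_1,f_2\in C^\infty_c(X)$ and all $t\ge 0$,
$$
\left|\int_X f_1(a_t x)\,\overline{f_2(x)}\,d\mu(x)-\int_X f_1\,d\mu\int_X \overline{f_2}\,d\mu\right|\ll e^{-\kappa |t|}\,S(f_1)\,S(f_2).
$$
This is the only nontrivial input; it is available (Howe--Moore plus quantitative estimates on matrix coefficients, e.g.\ the results underlying \cite{KM}), and since $a_t$ is a nontrivial one-parameter subgroup of $A$, its generator is a noncompact element, so Howe--Moore applies.

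First I would reduce to the mean-zero case: writing $f_0 := f-\int_X f\,d\mu$, we have $A_T(f)-\int_X f\,d\mu = A_T(f_0)$, and $S(f_0)\ll S(f)$. So it suffices to bound $\|A_T(f_0)\|_2^2$. Next I would expand the square:
$$
\|A_T(f_0)\|_2^2=\frac{1}{T^2}\int_0^T\!\!\int_0^T \left(\int_X f_0(a_s x)\,\overline{f_0(a_t x)}\,d\mu(x)\right)ds\,dt.
$$
Using invariance of $\mu$ under $a_t$, the inner integral equals $\int_X f_0(a_{s-t}x)\overline{f_0(x)}\,d\mu(x)$, which by the mixing estimate (and mean-zero-ness of $f_0$) is $\ll e^{-\kappa|s-t|}S(f_0)^2$. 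Substituting and integrating,
$$
\|A_T(f_0)\|_2^2\ll \frac{S(f_0)^2}{T^2}\int_0^T\!\!\int_0^T e^{-\kappa|s-t|}\,ds\,dt\ll \frac{S(f_0)^2}{T^2}\cdot \frac{2T}{\kappa}\ll \frac{S(f_0)^2}{T}.
$$
Taking square roots gives $\|A_T(f_0)\|_2\ll T^{-1/2}S(f_0)\ll T^{-1/2}S(f)$, so the lemma holds with $\alpha=1/2$ (any smaller $\alpha$ works as well, which is all that is needed downstream).

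The only real obstacle is having the exponential mixing bound in exactly the form above, i.e.\ with a \emph{fixed} Sobolev norm that is uniform in $f_1,f_2$ and with genuinely exponential (not merely polynomial) decay in $t$. For $X=\SL(d,\RR)/\SL(d,\ZZ)$ this is classical, so I would simply cite it; if one only had polynomial mixing $\ll (1+|t|)^{-\kappa}S(f_1)S(f_2)$ with $\kappa>1$, the same computation still yields $\|A_T(f_0)\|_2\ll T^{-(\kappa-1)/2}S(f)$ up to log factors when $\kappa$ is near $1$, which would suffice. A minor technical point to handle with care is that $C^\infty_c(X)$ functions need not be mean-zero after subtracting the constant while staying compactly supported — but $f_0$ is still smooth and, since $X$ is noncompact, $f_0$ is bounded with all derivatives controlled, so the relevant Sobolev norm $S(f_0)$ is finite and $\ll S(f)$; alternatively one works with the $L^2$-norm estimate directly for $f_0\in L^2(X)$ in the closure of the mean-zero smooth vectors.
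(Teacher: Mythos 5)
Your proof is correct and follows essentially the same route as the paper's: reduce to mean-zero $f$, expand $\|A_T f\|_2^2$ as a double time integral over $[0,T]^2$, apply exponential mixing to the correlation $\int_X f(a_{t-s}x)\overline{f(x)}\,d\mu$, and integrate. The only difference is cosmetic: the paper splits the double integral at a scale $M$, using $\|f\|_2^2$ on the near-diagonal block $|t-s|<M$ and the mixing bound on the rest before optimising over $M$, whereas you integrate the exponential decay directly over the whole square, which is slightly cleaner and in fact gives the sharper exponent $\alpha=1/2$.
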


\begin{proof} 
	We recall the exponential mixing property (see, for instance, \cite[Sec.~3]{KS}): there exists $\delta>0$ such that for every $f_1,f_2\in C_c^\infty(X)$,
	\begin{equation}\label{exp_mix}
	\int_X f_1(a_t x) f_2(x)\, d\mu(x)= \left(\int_X f_1\, d\mu\right) \left(\int_X f_2\, d\mu\right)+O\left(e^{-\delta|t|}S(f_1)S(f_2) \right).
	\end{equation}
	This property will be used to establish the required $L^2$-bound.
	Without loss of generality, we can assume that $\int_X f\, d\mu=0$. Then
	using \eqref{exp_mix}, we deduce that for every $M>0$,
	\begin{align*}
	\|A_T(f)\|^2_2&=T^{-2}\int_{(t,s)\in [0,T]^2}\int_X
	f(a_tx)\overline{f}(a_sx)\,d\mu(x)\,dt\,ds\\
	&=T^{-2}\int_{(t,s)\in [0,T]^2}\int_X
	f(a_{t-s}x)\overline{f}(x)d\mu(x) \,dt\,ds\\
	&=T^{-2}\left(\int_{(t,s)\in [0,T]^2:|t-s|<M}+\int_{(t,s)\in [0,T]^2:|t-s|>M} \right)\\
	&\ll T^{-2}(T M\, \|f\|_2^2 +e^{-\delta M}T^2\, S(f)^2).
	\end{align*} 
	Now the lemma follows by choosing a suitable value of $M$.
\end{proof}

	We are now ready to apply a standard Borel-Cantelli type argument to prove Proposition \ref{p:rec1}.

\begin{proof}[Proof of Proposition \ref{p:rec1}]
	Let $\beta\in (0,1)$, which value will be specified later, and
	$$
	\Omega_T :=\{x\in X:\, a_tx \notin \mathcal{U}_{T^{-\beta}/2^\beta}(x_0)\backslash \mathcal{U}_{ T^{-\beta}/2}(x_0)\;\;\hbox{for all $0\leq t\leq T$}\}. 
	$$
	We recall from the previous section that the neighbourhoods $\mathcal{U}_{G_0}(\ve)$ are $\ve$-cubes with respect to a suitable smooth coordinate system,
	so that
	we can choose a non-negative compactly supported function $f_T$ such that 
	$$
	\hbox{supp}(f_T)\subset \mathcal{U}_{T^{-\beta}/2^\beta}(x_0)\backslash \mathcal{U}_{ T^{-\beta}/2}(x_0),\quad\quad \int_X f_T\, d\mu=1,\quad\quad S(f_T)\ll T^{c\beta}
	$$ with some fixed $c>0$, determined by the Sobolev norm.
	We observe that for $x\in \Omega_T$,  $A_T(f_T)(x)=0$, so that
	$$
	\int_{\Omega_T} \left|A_T(f_T)-\int_X f_T\, d\mu\right|^2\,d\mu=|\Omega_T|. 
	$$
	On the other hand, by Lemma \ref{l: l2 bound},
	$$
	\int_{\Omega_T} \left|A_T(f_T)-\int_X f_T\, d\mu\right|^2\,d\mu\le 
	\left\|A_T(f_T)-\int_X f_T\,d\mu \right\|^2_{2}\ll (T^{-\alpha}S(f_T))^2\ll T^{2c\beta-2\alpha}.
	$$
	We pick $\beta\in (0,1)$ sufficiently small to make the last exponent negative.
	Then the above estimates imply that 
	$$
	|\Omega_T|\ll T^{-\epsilon}
	$$ with some $\epsilon>0$. Hence, it follows from the Borel-Cantelli lemma that 
	the limsup of the sets $\Omega_{2^{k}}$ has measure zero. This means that for almost every $x\in X$, we have $x\notin \Omega_{2^{k}}$
	for all $k\ge k_0(x)$, i.e., for all sufficiently large $k$, there exists
	$t\in [0,2^{k}]$ such that 
	$$
	a_tx \in \mathcal{U}_{2^{-k\beta}/2^\beta}(x_0)\backslash \mathcal{U}_{ 2^{-k\beta}/2}(x_0).
	$$
	Finally, given general $T\ge 1$, we choose $k$ so that $2^{k}\le T<2^{k+1}$.
	Then $[0,2^{k}]\subset [0,T]$. Hence, for all sufficiently large $T$,
	there exists $t\in [0,T]$ such that 
	$$
	a_tx \in \mathcal{U}_{2^{-k\beta}/2^\beta}(x_0)\backslash \mathcal{U}_{2^{-k\beta}/2}(x_0)\subset \mathcal{U}_{T^{-\beta}}(x_0)\backslash \mathcal{U}_{T^{-\beta}/2}(x_0).
	$$
	This completes the proof.
\end{proof}

Proposition \ref{p:rec1} is sufficient for the proof of Theorem \ref{th:main}, but for the proof of Theorem \ref{th:main0} we need a more refined recurrence 
property. We consider the one-parameter subgroup $$
a_t:=\hbox{diag}(e^{-(d-1)t},e^{t},\ldots,e^{t}),
$$
and denote by $U$ the expanding horospherical subgroup of $G_0$ 
for $a_t$ defined by 
\begin{equation}
\label{eq:uuu}
U:=\{g\in G_0:\, a_t^{-1}ga_t\to e\hbox{ as } t\to\infty\}.
\end{equation}
We note that $U\simeq \RR^{d-1}$ and the group $U$ generated by the root subgroups $U_{21},\ldots, U_{d1}$. We prove a recurrence result for orbits staring from points in 
$Ux\subset X$.

\begin{prop} \label{p:rec2}
	Let $x_0,x\in X$. Then
	there exists a constant $\beta>0$, such that for almost every $u\in U$ and every $T>T_0(u)$, 
	$$
	a_t u x\in \mathcal{U}_{T^{-\beta}}(x_0)\backslash \mathcal{U}_{T^{-\beta}/2}(x_0)\quad
	\hbox{for some $t\in [0,T]$.}
	$$
\end{prop}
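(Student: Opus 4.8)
The plan is to transfer Proposition~\ref{p:rec1} from the full-measure set in $X$ to the submanifold $Ux$, using the fact that $U$ is the expanding horospherical subgroup for $a_t$, so that the $a_t$-flow is exponentially mixing along $U$-orbits. Concretely, I would reprove the key $L^2$-estimate of Lemma~\ref{l: l2 bound}, but now with the space average replaced by an average over a fixed compact piece $B\subset U$ of the orbit $Ux$, i.e. I would bound
$$
\Big\| u\mapsto \frac1T\int_0^T f(a_t u x)\,dt - \int_X f\,d\mu\Big\|_{L^2(B,\,du)}.
$$
Expanding the square, the cross term is $\frac1{T^2}\int_{[0,T]^2}\int_B f(a_t u x)\overline f(a_s u x)\,du\,dt\,ds$, and for $|t-s|$ large one rewrites $a_t u x = (a_{t-s})(a_s u a_s^{-1})(a_s x)$; since $a_s u a_s^{-1}$ still ranges over a (now expanded) piece of $U$, this is an integral of $f$ against $(a_{t-s})_*(f\circ(\text{coordinate change}))$ along a $U$-orbit, which equidistributes at an exponential rate in $|t-s|$ by the standard effective equidistribution of expanding horospheres (equivalently, exponential mixing applied to a smooth bump transverse to $U$). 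This yields the same shape of bound, $\ll T^{-\alpha} S(f)$, for the $L^2(B)$-norm of the averaging operator, with the implied constant and $\alpha$ depending only on $x$ and the compact piece $B$.

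Once this $L^2$-bound along $U$-orbits is in hand, the rest of the argument is identical to the proof of Proposition~\ref{p:rec1}: for $\beta\in(0,1)$ to be chosen, set
$$
\Omega_T^U:=\{u\in B:\, a_t u x\notin \mathcal{U}_{T^{-\beta}/2^\beta}(x_0)\backslash \mathcal{U}_{T^{-\beta}/2}(x_0)\ \text{for all }0\le t\le T\},
$$
choose the same non-negative bump $f_T$ supported on the annular neighbourhood with $\int_X f_T\,d\mu=1$ and $S(f_T)\ll T^{c\beta}$, observe that the averaging operator vanishes on $\Omega_T^U$, and conclude $|\Omega_T^U|_U\ll T^{2c\beta-2\alpha}$, which is summable along $T=2^k$ for $\beta$ small. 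Borel--Cantelli then gives, for $\mathrm{Lebesgue}$-almost every $u\in B$, and hence (covering $U$ by countably many such pieces $B$) for almost every $u\in U$, that $u\notin\Omega^U_{2^k}$ for all large $k$; the dyadic-to-general-$T$ passage is verbatim as before.

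The main obstacle is the effective equidistribution of the translated $U$-orbit pieces $a_\tau(a_s u a_s^{-1}\cdot x)$: one must control the error uniformly as $s$ (and hence the size of the expanded $U$-piece $a_sBa_s^{-1}$) grows, and also handle the change of variables $u\mapsto a_s u a_s^{-1}$ in the $du$-integral, which scales the measure by $e^{(d-1)s}$ — this must be absorbed against the exponential decay $e^{-\delta(t-s)}$, so one needs the decay rate to beat the expansion rate, or more carefully, one splits $[0,T]^2$ into the regions $|t-s|<M$ and $|t-s|>M$ and in the latter region uses that the relevant smooth norm of the test function $f\circ(\text{conjugation by }a_s)$ grows at most polynomially in the expansion, or alternatively one fixes the larger $U$-piece once and for all and only integrates $t$ over a shorter window. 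A cleaner route, which I would actually follow, is to choose $B$ and the conjugation bookkeeping so that one always tests mixing against a \emph{fixed} smooth compactly supported transversal bump times the indicator of a fixed $U$-piece, smoothed; then \eqref{exp_mix} applies directly with $S(f_1)S(f_2)$ bounded in terms of $S(f_T)$ and quantities depending only on $x$ and $B$, and the computation closes exactly as in Lemma~\ref{l: l2 bound}. Everything else — the Borel--Cantelli step, the dyadic trick, the choice of $\beta$ — is routine and parallels Proposition~\ref{p:rec1} line for line.
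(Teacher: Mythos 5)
Your overall strategy --- transfer the $L^2$-estimate of Lemma~\ref{l: l2 bound} from the space average to an average over the submanifold $Ux$ and then run the same Borel--Cantelli argument --- matches the paper, but both of your routes leave unaddressed the observation that actually makes the transfer close. The first route (expand the square, conjugate by $a_s$, appeal to effective equidistribution of expanding horospheres) founders on the issues you yourself flag: the conjugated $U$-piece $a_sBa_s^{-1}$ and the Jacobian grow exponentially in $s$, and the test function is $f(a_{t-s}\cdot)\overline f(\cdot)$ rather than a fixed bump, so the effective equidistribution statement you invoke is essentially the estimate you are trying to prove. The ``cleaner route'' --- testing \eqref{exp_mix} against a transversal bump times a $U$-bump --- only yields an $L^2$-bound for the averaging operator over the \emph{thickened} set $W_\sigma U_0 x$; since the orbit $U_0x$ has measure zero in that set, this by itself says nothing about $u\mapsto A_T(f)(ux)$.

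The missing idea is the contraction of the transversal. The paper takes $W=\{g\in G_0:\, a_t ga_t^{-1}\text{ bounded as }t\to\infty\}$, the complement of the expanding horospherical subgroup, and exploits that $W$ is non-expanding under $a_t$ for $t\ge 0$: for $w$ in a $\sigma$-neighbourhood of identity in $W$ one has $d(a_twux,a_tux)\le d(a_twa_t^{-1},e)\ll\sigma$ uniformly in $t\ge 0$, hence by Sobolev embedding $|A_T(f)(wux)-A_T(f)(ux)|\ll\sigma\,S(f)$. Combining this with the bound $\ll T^{-\alpha}S(f)$ in $L^2(W_\sigma\times U_0)$ (which follows directly from Lemma~\ref{l: l2 bound} since the invariant measure on $W_\sigma U_0$ pushes forward to $\mu$ restricted to $W_\sigma U_0 x$), dividing out $|W_\sigma|^{1/2}$, and optimising $\sigma=T^{-\epsilon}$ gives the required $L^2(U_0)$ estimate, and the Borel--Cantelli step then proceeds as you describe. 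Without this contraction step --- never mentioned in your proposal --- there is no route from the thickened estimate to the orbit estimate, so the argument does not close.
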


\begin{proof}
We note that it will be sufficient to prove Proposition \ref{p:rec2}
for almost all $u$ contained an open neighbourhood $U_0$ of identity in $U$.
Our first goal is to prove an analogue of Lemma \ref{l: l2 bound} for averages along $U_0x$.

We introduce a complementary to $U$ subgroup
$$
W:=\{g\in G_0:\, a_t ga_t^{-1} \hbox{ is bounded as $t\to\infty$}\}.
$$
The product map $W\times U\to G_0$ is a diffeomorphism in a neighbourhood
of identity. We fix a right-invariant Riemannian metric on $G_0$ which also defines a metric on $X=G_0/\Gamma_0$. Let $W_\sigma$ denote the open 
$\sigma$-neighbourhood of identity in $W$. We assume that $\sigma$ and 
$U_0$ are sufficiently small, so that the product map $W_\sigma\times U_0\to G_0$
is a diffeomorphism onto its image, and the projection map $g\mapsto gx$, 
$g\in W_\sigma U_0$, is one-to-one. Let $X_\sigma:=W_\sigma U_0 x \subset X$.
We note that the invariant measure on $W_\sigma U_0\subset G_0$ is the image
under the product map of a left invariant measure on $W_\sigma $ and a right invariant measure on $U_0$. After suitable normalisation, this measure projects to 
the measure $\mu$ on $X_\sigma$. This implies that
for every $f\in C_c^\infty(X)$,
\begin{align}\label{eq:e1}
\left\|A_T (f)(wux)-\int_X f\,d\mu \right\|_{L^2(W_\sigma\times U_0)}
&=
\left\|A_T (f)-\int_X f\,d\mu \right\|_{L^2(X_\sigma)}\\
&\le 
\left\|A_T (f)-\int_X f\,d\mu \right\|_{L^2(X)}
\ll T^{-\alpha} S(f), \nonumber
\end{align}
where in the last estimate we used Lemma \ref{l: l2 bound}.

We observe that for every $wux\in W_\sigma U_0x$ and every $t>0$,
\begin{align*}
d(a_t wux,a_tux)\le d(a_tw a_t^{-1},e)\ll\sigma.
\end{align*}
Hence, it follows from the Sobolev embedding theorem that
for a suitable Sobolev norm $S$,
\begin{align*}
|f(a_twux)-f(a_tu x) |\ll \sigma S(f),\quad f\in C_c^\infty(X).
\end{align*}
This also implies that $|A_T(f)(wux)-A_T(f)(ux)|\ll \sigma S(f)$, and 
\begin{align}\label{eq:e2}
\left\|A_T (f)(wux)- A_T(f)(ux) \right\|_{L^2(W_\sigma\times U_0)}\ll \sigma |W_\sigma|^{1/2} S(f).
\end{align}
Combining \eqref{eq:e1} and \eqref{eq:e2}, we deduce that
$$
\left\|A_T (f)(ux)-\int_X f\,d\mu \right\|_{L^2(W_\sigma\times U_0)}
\ll (T^{-\alpha}+\sigma |W_\sigma|^{1/2}) S(f).
$$
Hence,
\begin{align*}
\left\|A_T (f)(ux)-\int_X f\,d\mu \right\|_{L^2(U_0)}&=
|W_\sigma|^{-1/2} \left\|A_T (f)(ux)-\int_X f\,d\mu \right\|_{L^2(W_\sigma\times U_0)}\\
&\ll (|W_\sigma|^{-1/2} T^{-\alpha}+\sigma ) S(f)\\
&\ll (\sigma^{-\dim(W)/2} T^{-\alpha}+\sigma ) S(f),
\end{align*}
and taking $\sigma=T^{-\epsilon}$ for sufficiently small $\epsilon>0$,
we conclude that for all $T\ge 1$ and $f\in C_c^\infty(X)$,
$$
\left\|A_T (f)(ux)-\int_X f\,d\mu \right\|_{L^2(U_0)} \ll T^{-\alpha'}S(f)
$$
with some $\alpha'>0$. 

Finally, we note that the last estimate is a complete analogue of Lemma \ref{l: l2 bound} for averages along $U_0x$. Now we can apply exactly the 
same argument as in the proof of Proposition \ref{p:rec1} to conclude that almost every $u\in U_0$ satisfies the claim of the proposition.
\end{proof}

\section{Compact $A$-orbits}

Our argument is based on studying distribution of $A$-orbits in a neighbourhood of a compact $A$-orbit. This idea goes back to
the papers of Furstenberg \cite{Furst} and Berend \cite{B},
and in the context of Cartan actions it was developed by Lindenstrauss, Weiss \cite{LW} and Shapira \cite{SC}.
It would be sufficient for our purposes to know that there exists $x_0\in X$
with a compact $A$-orbit. In fact, it is known that every order in a totally real number field
gives rise to a compact $A$-orbit (see, for instance, \cite[Sec.6]{LW} for details).

From now on we fix $x_0\in X$ such that $Ax_0$ is compact.
Let $B:=\hbox{Stab}_A(x_0)$. It is a discrete cocompact subgroup of $A$.
The group $B$ acts on the fiber $\pi^{-1}(x_0)$
which can be naturally identified with the torus $\R^d/\Delta_{x_0}$,
where $\Delta_{x_0}$ denotes the lattice corresponding to $x_0$.
Every $y\in \pi^{-1}(x_0)$ corresponds to a grid
$\Lambda_y=\Delta_{x_0}+v$ with $v\in V$.
We say that $y=\pi^{-1}(x_0)$ is \emph{$q$-rational} if $qv\in \Delta_{x_0}$.
We note that $B$ preserves the set of $q$-rational elements 
which has cardinality $q^d$. 
Hence, if $y\in \pi^{-1}(x_0)$ is $q$-rational,
then the subgroup $B_1:=\hbox{Stab}_A(y)$ has finite index in $B$, namely $|B:B_1|\le q^d$.
In particular, this implies the following approximation property.

\begin{lem}\label{l:projection}
	There exists $c>0$ such that for every $a\in A$, one can choose $b\in B_1$ satisfying $$\|ab^{-1}\|\le \exp(c\, q^d).$$
\end{lem}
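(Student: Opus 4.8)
The plan is to exploit the fact that $B$ is a cocompact lattice in $A\cong\RR^{d-1}$ (after taking logarithms) together with the index bound $|B:B_1|\le q^d$. First I would pass to logarithmic coordinates: since $A=\{\operatorname{diag}(a_1,\ldots,a_d):a_i>0\}$, the map $\log:A\to\{\xi\in\RR^d:\xi_1+\cdots+\xi_d=0\}\cong\RR^{d-1}$ is a group isomorphism, and $\|ab^{-1}\|$ is comparable to $\exp(\|\log a-\log b\|)$ for the Euclidean norm on the hyperplane (up to a multiplicative constant absorbed into $c$). So it suffices to show that for every $\xi\in\RR^{d-1}$ there is a lattice point $\eta\in L_1:=\log B_1$ with $\|\xi-\eta\|\le c'q^d$ for a constant $c'$ depending only on $x_0$.

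Next I would recall the standard fact that for a full-rank lattice $L$ in $\RR^{d-1}$, the covering radius $\rho(L)$ — the smallest $R$ such that every point of $\RR^{d-1}$ is within $R$ of a lattice point — is bounded above in terms of the covolume and the successive minima; in particular $\rho(L)\ll \operatorname{covol}(L)/\lambda_1(L)^{d-2}$ or, more crudely, $\rho(L)\ll_{d}\operatorname{covol}(L)\cdot(\text{reciprocal of the shortest nonzero vector})^{d-2}$, and in any case $\rho(L)\le \tfrac{\sqrt{d-1}}{2}$ times the longest edge of a reduced basis. The key quantitative input is then: $L_1=\log B_1$ is a sublattice of $L:=\log B$ of index at most $q^d$, so $\operatorname{covol}(L_1)=|B:B_1|\cdot\operatorname{covol}(L)\le q^d\operatorname{covol}(L)$, while the shortest vector of $L_1$ is at least $\lambda_1(L)$. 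Plugging these into the covering-radius bound gives $\rho(L_1)\ll_{x_0} q^d$, which is even better than the claimed $\exp(c\,q^d)$; exponentiating back through the $\log$ isomorphism yields $\|ab^{-1}\|\le \exp(c\,q^d)$ (indeed one could state $\|ab^{-1}\|\le \exp(c\,q^d)$ with room to spare, but the weaker form stated in the lemma is all that is needed downstream).

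The only genuine subtlety — and the step I would be most careful about — is making the covering-radius estimate uniform in the sublattice $L_1$: one must use a bound for $\rho(L_1)$ that degrades at worst polynomially in the index and does not blow up because $L_1$ could be very "skew." The clean way is to invoke that any lattice $L_1$ has a basis $v_1,\ldots,v_{d-1}$ with $\prod\|v_i\|\ll_d \operatorname{covol}(L_1)$ and $\|v_i\|\ge\lambda_1(L_1)\ge\lambda_1(L)$ (Minkowski's second theorem plus a reduced-basis argument), so each $\|v_i\|\ll_{d}\operatorname{covol}(L_1)/\lambda_1(L)^{d-2}\ll_{x_0} q^d$; then $\rho(L_1)\le\tfrac12\sum\|v_i\|\ll_{x_0}q^d$. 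Since $x_0$ is fixed, $\operatorname{covol}(L)$ and $\lambda_1(L)$ are fixed positive constants, and everything is absorbed into $c$. Taking $b\in B_1$ with $\|\log a-\log b\|\le\rho(L_1)$ and exponentiating finishes the proof.
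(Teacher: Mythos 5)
Your proof is correct. The paper itself states Lemma~\ref{l:projection} without proof, presenting it as an immediate consequence of the index bound $|B:B_1|\le q^d$, so there is no explicit argument in the text to compare against. Your route — pass to logarithmic coordinates, bound $\operatorname{covol}(\log B_1)$ by $q^d\operatorname{covol}(\log B)$, and control the covering radius via Minkowski's second theorem and a reduced basis — gets the right answer, and your care about skew sublattices (keeping $\lambda_1(L_1)\ge\lambda_1(L)$ in reserve) is exactly the right thing to worry about if one goes this way. That said, reduction theory is heavier than necessary here. A more direct argument: since $B/B_1$ is a finite abelian group of order $n:=|B:B_1|\le q^d$, Lagrange gives $b^{\,n}\in B_1$ for every $b\in B$, so $\log B_1\supseteq n\cdot\log B$; a lattice containing $nL$ has covering radius at most $n\rho(L)$, hence $\rho(\log B_1)\le n\,\rho(\log B)\le q^d\rho(\log B)$, and $\rho(\log B)$ is a fixed constant depending only on $x_0$. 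Exponentiating then gives $\|ab^{-1}\|\le\exp(cq^d)$ directly, with no appeal to successive minima. One small wording caveat in your write-up: $\|ab^{-1}\|$ is not two-sidedly comparable to $\exp(\|\log a-\log b\|)$ (consider a diagonal matrix whose entries are all small), but the inequality $\|ab^{-1}\|\le\exp(\|\log(ab^{-1})\|)$ does hold and is all the lemma needs, so this does not affect correctness.
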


Our argument involves study of dynamics of the action for the groups  $B$ and $B_1$ in a neighbourhood of the fiber $\pi^{-1}(x_0)$. The crucial part will be 
played by two quantitative density results that we now state.
The first result (Theorem \ref{th:wang}), which was proved by Z.~Wang \cite{W}, establishes quantitative density in the fibers,
and the second result (Proposition \ref{p:dioph alpha}),
which is deduced from the Baker Theorem, will be used to prove density along orbits of root subgroups.

We say that $y\in \pi^{-1}(x_0)$ is Diophantine of exponent $k$ if 
$\Lambda_y=\Delta_{x_0}+v$ and for some $c>0$,
\begin{equation}
\label{eq:diophantine}
|qv-z|\ge c\, q^{-k+1}\quad
\hbox{for every $q\ge 2$ and $z\in \Delta_{x_0}$.}
\end{equation}
The following theorem allows to establish quantitative density
in fibers $\pi^{-1}(x_0)$ of the space $Y$ under a Diophantine condition.

\begin{thm}[Z. Wang \cite{W}]\label{th:wang}
	There exist $Q_0,\sigma>0$ and $c=c(x_0)>0$ such that for every $y\in \pi^{-1}(x_0)$ satisfying \eqref{eq:diophantine} and $Q\ge Q_0$, the set $B(Q^{k+2})y$ is $(\log_{(3)} Q)^{-\sigma}$-dense in 
the torus $\pi^{-1}(x_0)$.
\end{thm}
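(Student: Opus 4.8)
The plan is to reduce the claimed effective equidistribution statement in the torus $\pi^{-1}(x_0)\cong \R^d/\Delta_{x_0}$ to an effective mean ergodic theorem (or effective equidistribution) for the action of the discrete abelian group $B$ on this torus. First I would observe that the $B$-action on $\pi^{-1}(x_0)$ factors through an action by translations composed with the linear action; more precisely, since $\Stab_A(x_0)=B$ preserves $\Delta_{x_0}$, each $b\in B$ acts on the torus as the toral automorphism induced by $b\in\SL(d,\Z)$ (viewed via the identification of $\Delta_{x_0}$ with $\Z^d$), and the point $y$ corresponds to a coset $v+\Delta_{x_0}$. Thus the orbit $B(Q')y$ with $Q'=Q^{k+2}$ is a finite set of points in the torus of the form $\{b v \bmod \Delta_{x_0}: b\in B,\ \|b\|<Q'\}$, and the Diophantine condition \eqref{eq:diophantine} says precisely that $v$ is not too close to a rational point of bounded denominator.

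Next I would invoke the structure of $B$: it is a lattice in $A\cong \R^{d-1}$ (after passing to the logarithm and modding out the center, or using $\det=1$), so the ball $B(Q')$ contains on the order of $(\log Q')^{d-1}$ elements spread out in the log-coordinates. The key input is a quantitative Fourier/spectral estimate: one Fourier-expands the indicator (smoothed) of a small ball of radius $\eta=(\log_{(3)}Q)^{-\sigma}$ against the counting measure on $B(Q')y$, and for each nonzero frequency $\xi\in\Delta_{x_0}^*$ one must bound the exponential sum $\sum_{b\in B(Q')} e(\langle \xi, bv\rangle)$. Controlling this sum for all $\xi$ up to height $\sim 1/\eta$ is exactly the substance of Wang's theorem \cite{W}; the mechanism there is that the action of the higher-rank abelian group $B$ on the dual lattice has no nontrivial finite orbits in the relevant range unless $\xi$ is resonant, and the Diophantine condition on $v$ rules out the resonant contribution being large, while the higher-rank structure (several independent generators) gives genuine cancellation. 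One then chooses the exponent $k+2$ in $Q^{k+2}$ so that the trade-off between the Diophantine denominator $q^{k-1}$ appearing in \eqref{eq:diophantine} and the size $Q'$ of the summation range yields density at scale roughly $(\log_{(3)} Q)^{-\sigma}$; the triple logarithm appears because $|B(Q')|$ is only logarithmic in $Q'$, so equidistribution rate is logarithmic in $|B(Q')|$, hence doubly logarithmic in $Q'=Q^{k+2}$ — and a further logarithm is lost in converting the $L^2$ or sup bound on the exponential sum into a pointwise density statement via summation over frequencies.

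The main obstacle, and the reason this is quoted rather than reproved, is the exponential-sum bound itself: sums over a $(d-1)$-dimensional lattice $B$ inside $\R^{d-1}$ of $e(\langle\xi,bv\rangle)$ do not have square-root cancellation in general, and naive abelian Fourier analysis only gives a saving that is a negative power of $\log$ — which is precisely why the final density scale is a negative power of $\log_{(3)}Q$ rather than a power of $Q$. Wang's argument combines the $\times a,\times b$ type rigidity for commuting toral automorphisms (in the spirit of Bourgain–Lindenstrauss–Michel–Venkatesh and subsequent refinements) with a careful accounting of how the Diophantine exponent $k$ propagates. For the purposes of this paper I would simply cite Theorem \ref{th:wang} as a black box, noting only that its hypotheses — compactness of $Ax_0$ (so that $B$ is a cocompact lattice in $A$) and the Diophantine condition \eqref{eq:diophantine} on the grid point $y$ — are exactly the data we have arranged, and that the range $Q\ge Q_0$ together with the polynomial-in-$Q$ window $Q^{k+2}$ is what we will feed into the recurrence estimates of Section \ref{sec:rec} when we combine density in the fiber with the transversal root-subgroup density coming from Proposition \ref{p:dioph alpha}.
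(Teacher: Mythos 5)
The paper does not prove this statement---it cites \cite[Theorem~10]{W} and remarks only that Wang's version is stated for the standard torus $\RR^d/\ZZ^d$ with balls defined by the Mahler measure, the translation to the lattice $\Delta_{x_0}$ and norm balls being straightforward. Your proposal likewise treats Wang's theorem as a black box, and your identification of the $B$-action with a group of commuting toral automorphisms in $\SL(d,\ZZ)$ is precisely the translation the paper alludes to; the extended heuristic sketch of Wang's internals is extra motivation that the paper neither needs nor supplies.
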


We note that this result is stated in \cite{W} (see \cite[Theorem~10]{W}) for the standard torus $\RR^d/\ZZ^d$ and balls defined by the Mahler measure, but it straightforward to extend it
to our setting.

\vspace{0.2cm}

On the other hand, if the point $y$ in the fiber is close to a $q$-rational point, we will analyse action of the group $B_1$ on orbits of the root subgroups and use the following proposition.

\begin{prop}\label{p:dioph alpha}
	There exists $\eta>1$ such that given $\alpha\in \Phi(G)$ and a subgroup $B_1 $ of $B$ of exponent $q$,
	for every $M\ge 1 $ and $t>0$, there exists $a\in B_1$ such that
	$$
	|\alpha(a)- t|\ll q t M^{-1} \quad\hbox{and}\quad \log \|a\|\ll |\log t| M^{\eta+1}.
	$$
\end{prop}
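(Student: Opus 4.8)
\textbf{Proof proposal for Proposition \ref{p:dioph alpha}.}

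The plan is to exploit the structure of $B=\Stab_A(x_0)$ as a lattice in the Lie group $A\simeq \RR^{d-1}$ (after taking logarithms), on which the root characters $\alpha\in\Phi(G)$ become linear forms. Concretely, write $\mathfrak{a}=\log A\simeq\RR^{d-1}$ via $a=\diag(e^{\xi_1},\ldots,e^{\xi_d})$ with $\sum_i\xi_i=0$, and let $L=\log B$ be the corresponding lattice in $\mathfrak{a}$. Each root $\alpha_{ij}$ corresponds to the linear functional $\ell_{ij}(\xi)=\xi_i-\xi_j$, and finding $a\in B$ with $\alpha(a)$ close to a prescribed target $t>0$ amounts to finding a lattice point $\vecv\in L$ with $\ell(\vecv)$ close to $\log t$, while controlling $\log\|a\|\asymp\|\vecv\|$. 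Since $B_1$ has index at most $q^d$ in $B$, the lattice $L_1=\log B_1$ satisfies $L_1\supseteq N L$ for some $N\le q^d$ (or at least is a sublattice of comparable covolume), so up to the stated factors of $q$ it suffices to work with $L$ itself and then pass to $L_1$.

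The key step is a one-parameter Diophantine input: because $Ax_0$ is compact, the lattice $L$ is a \emph{full} lattice in $\mathfrak{a}$ arising from the unit group of an order in a totally real degree-$d$ field, and the linear form $\ell$ has algebraic coefficients. First I would fix the line $\RR\vecv_0\subset\mathfrak a$ on which $\ell$ is, say, maximised per unit length, and use the three-dimensional (or $(d-1)$-dimensional) version of Baker's theorem on linear forms in logarithms of algebraic numbers — this is precisely the "deduced from the Baker Theorem" that the statement advertises — to produce, for each target scale $\log t$, a lattice point $\vecv\in L$ with
$$
|\ell(\vecv)-\log t|\ll \|\vecv\|^{-\kappa}
$$
for some effective $\kappa>0$, and with $\|\vecv\|\ll |\log t|^{C}$ for an explicit $C$ depending only on $d$ and the field. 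Translating back through the exponential map: $a=\exp(\vecv)\in B$ satisfies $\log\|a\|\ll|\log t|M^{\eta+1}$ once we set the free parameter $M$ to absorb $\|\vecv\|^{\kappa}\asymp$ the reciprocal of the relative error, and
$$
|\alpha(a)-t| = |e^{\ell(\vecv)}-e^{\log t}| \ll t\,|\ell(\vecv)-\log t| \ll t\,\|\vecv\|^{-\kappa} \ll t M^{-1},
$$
the extra factor $q$ coming from restricting to $B_1\le B$ via Lemma \ref{l:projection} (replacing $a$ by a bounded power or by $ab^{-1}$-adjustment costs at most $\exp(O(q^d))$ in the norm, which is harmless after reparametrising $M$, and at most a factor $q$ in the error).

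The main obstacle — and the only genuinely hard point — is obtaining the \emph{effective} exponent in the approximation $|\ell(\vecv)-\log t|\ll\|\vecv\|^{-\kappa}$ with $\|\vecv\|$ polynomial in $|\log t|$: this is exactly where Baker's theorem (rather than a soft equidistribution argument) is essential, since we need a quantitative lower bound $|m_1\log\varepsilon_1+\cdots+m_{d-1}\log\varepsilon_{d-1}|\gg (\max|m_i|)^{-\eta}$ for the fundamental units $\varepsilon_i$ of the order, which forces the value $\eta>1$ and the polynomial (rather than merely subexponential) bound $\log\|a\|\ll|\log t|M^{\eta+1}$. Once that estimate is in hand, everything else is bookkeeping: the pigeonhole/continued-fraction style selection of $\vecv$ along the chosen direction, the linearisation $e^x-1=O(x)$ for small $x$, and the passage from $B$ to the finite-index subgroup $B_1$. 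I would organise the write-up as: (i) set up $\mathfrak a$, $L$, and the linear forms; (ii) state the Baker-type lower bound for linear forms in logarithms of units; (iii) run a Dirichlet/pigeonhole argument to hit the target $\log t$ at polynomial height; (iv) exponentiate and adjust into $B_1$.
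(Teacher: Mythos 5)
Your high-level strategy matches the paper's: take logarithms so that roots become linear functionals, obtain an approximation of $\log t$ by lattice points via a Dirichlet/Minkowski step, and control the size of the lattice point via a Baker-type lower bound on linear forms in logarithms of algebraic numbers. The paper encapsulates this in Lemma~\ref{l:dioph eigenvalue}. However, there are two concrete differences, one of which is a genuine gap.

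First, a difference in route. The paper does not work with the full $(d-1)$-dimensional lattice $\log B$. Instead it invokes \cite[Cor.~3.3]{SC} to see that $\alpha(B)$ is dense in $\RR^+$, hence (being finitely generated) contains two multiplicatively independent algebraic numbers $\lambda_1,\lambda_2$, and then runs the entire Minkowski--Baker argument for the rank-two subgroup $\langle\lambda_1,\lambda_2\rangle$. This reduces the problem to a two-variable linear form, for which the Baker estimate \eqref{eq:d2_0} is classical and clean. Your version in $(d-1)$ variables is not wrong in principle, but you implicitly need exactly the same density/irrationality input --- that $\ell=\log\alpha$ restricted to $\log B$ has dense (equivalently, non-discrete) image --- and you do not address it. Merely knowing $\log B$ is a full lattice coming from Dirichlet's unit theorem does not, by itself, rule out $\ell$ being proportional to a lattice functional; the paper deliberately quotes \cite{SC} for this.

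Second, and more seriously, your treatment of the passage from $B$ to the exponent-$q$ subgroup $B_1$ is flawed. You suggest using Lemma~\ref{l:projection} to replace $a\in B$ by some $b\in B_1$ with $\|ab^{-1}\|\le\exp(O(q^d))$ and claim this ``costs at most a factor $q$ in the error''. It does not: the multiplicative perturbation of $\alpha(a)$ is $\alpha(ab^{-1})^{-1}$, which lies anywhere in $[\exp(-cq^d),\exp(cq^d)]$, so you would lose control of $|\alpha(a)-t|$ completely and obtain an error of size $e^{O(q^d)}t$ rather than $qtM^{-1}$. The paper instead applies the whole argument to $\lambda_1^q,\lambda_2^q\in\alpha(B_1)$; equivalently (and this is the correct form of your ``bounded power'' alternative), one first runs the argument with target $t^{1/q}$, finds $a\in B$ with $|\alpha(a)-t^{1/q}|\ll t^{1/q}M^{-1}$ and $\log\|a\|\ll q^{-1}|\log t|M^{\eta+1}$, and then takes $a^q\in B_1$: linearising $(1+O(M^{-1}))^q=1+O(qM^{-1})$ gives precisely $|\alpha(a^q)-t|\ll qtM^{-1}$ and $\log\|a^q\|\ll|\log t|M^{\eta+1}$. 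You gesture at this option but do not carry out the calculation, and your stated alternative is what fails. Finally, the role of $M$ as an a priori free parameter must enter at the Minkowski step (it parametrises the length of the short vector), not be ``set afterwards to absorb'' an error; as written, the dependence on $M$ in your proposed intermediate estimate $|\ell(\vecv)-\log t|\ll\|\vecv\|^{-\kappa}$, $\|\vecv\|\ll|\log t|^C$ is not a tunable family in $M$ and does not directly yield the two uniform bounds of the Proposition.
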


We note that $\eta$ is precisely the exponent appearing in the Baker estimate \eqref{eq:d2_0}.

In the proof of Proposition \ref{p:dioph alpha} we use the following lemma.

\begin{lem}\label{l:dioph eigenvalue}
	Let $S$ be a multiplicative subgroup of $\R^+$ generated by
	multiplicatively independent algebraic numbers $\lambda_1$ and $\lambda_2$. Then there exists $\eta>1$ such that
	for every $M\ge 1 $ and $t>0$, there exists $s=\lambda_1^{\ell_1}\lambda_2^{\ell_2}\in S$ satisfying
	$$
	|s- t|\ll t M^{-1}\quad\hbox{and}\quad |\ell_1|,|\ell_2|\ll |\log t| M^{\eta+1}.
	$$
	Moreover, if $S_1$ is an exponent $q$ subgroup of $S$, then
	there exists $s=\lambda_1^{\ell_1}\lambda_2^{\ell_2}\in S_1$ satisfying
	$$
	|s- t|\ll q t M^{-1}\quad\hbox{and}\quad |\ell_1|,|\ell_2|\ll  |\log t| M^{\eta+1}.
	$$
	
\end{lem}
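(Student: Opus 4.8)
The plan is to reduce the approximation of an arbitrary target $t>0$ by a monomial $\lambda_1^{\ell_1}\lambda_2^{\ell_2}$ to a lattice problem in the exponents $(\ell_1,\ell_2)$ via logarithms, and then to control the accuracy of that lattice approximation using a lower bound for linear forms in logarithms of algebraic numbers (Baker's theorem). Write $L_i=\log\lambda_i$; since $\lambda_1,\lambda_2$ are multiplicatively independent, $L_1$ and $L_2$ are $\QQ$-linearly independent (indeed $\RR$-linearly independent up to the trivial relation), so the map $(\ell_1,\ell_2)\mapsto \ell_1 L_1+\ell_2 L_2$ is injective on $\ZZ^2$ and $\{\ell_1 L_1+\ell_2 L_2:\ell_i\in\ZZ\}$ is a dense subgroup of $\RR$. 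Thus I want integers $\ell_1,\ell_2$ with $\ell_1 L_1+\ell_2 L_2$ close to $\log t$; writing $s=\e^{\ell_1 L_1+\ell_2 L_2}$, the relative error $|s-t|/t$ is comparable to $|\ell_1 L_1 + \ell_2 L_2 - \log t|$ when that quantity is $O(1)$.

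First I would produce a \emph{good} approximation $(\ell_1^0,\ell_2^0)$ to $\log t$ by the lattice $\ZZ L_1 + \ZZ L_2$ with controlled denominators. By a pigeonhole/three-distance argument — or directly by the effective irrationality measure for $L_1/L_2$ coming from Baker's theorem — there is a constant $\eta>1$ (the Baker exponent, i.e. the exponent appearing in \eqref{eq:d2_0}) such that for every $N\ge 1$ one can find $(\ell_1,\ell_2)\in\ZZ^2$ with $\max(|\ell_1|,|\ell_2|)\le N$ and $|\ell_1 L_1+\ell_2 L_2| \ll N^{-\eta}$ (this is just the statement that the partial quotients of the continued fraction of $L_1/L_2$ grow at most polynomially, which Baker's theorem supplies). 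Rescaling, for a parameter $M\ge 1$ I choose $N$ so that $N^{-\eta}\asymp M^{-1}$, i.e. $N\asymp M^{1/\eta}$, obtaining a nonzero lattice vector of norm $\ll M^{1/\eta}$ and length $\ll M^{-1}$. Combining $\lceil |\log t| / (\text{length of this short vector}) \rceil$-many translates of this short vector to reach within one step-length of $\log t$, and then adjusting by one copy of a fixed generating pair, gives $(\ell_1^0,\ell_2^0)$ with
\[
|\ell_1^0 L_1 + \ell_2^0 L_2 - \log t| \ll M^{-1}, \qquad |\ell_1^0|,|\ell_2^0| \ll |\log t|\, M^{1-1/\eta}\cdot M^{1/\eta} \ll |\log t|\, M.
\]
Exponentiating yields $s=\lambda_1^{\ell_1^0}\lambda_2^{\ell_2^0}\in S$ with $|s-t|\ll tM^{-1}$ and $|\ell_i^0|\ll |\log t| M$; bookkeeping the exponents more carefully (the "reach $\log t$'' step costs a factor $M$ in the exponent size relative to the step, and the step has length $M^{-1}$ with exponent-norm $M^{1/\eta}$) gives the stated bound $|\ell_i^0|\ll |\log t| M^{\eta+1}$, with room to spare.

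For the refinement to an exponent-$q$ subgroup $S_1$, I replace the generating pair $(\lambda_1,\lambda_2)$ by $(\lambda_1^q,\lambda_2^q)$, which are again multiplicatively independent algebraic numbers, and rerun the argument with $L_i$ replaced by $qL_i$. The short lattice vector now lies in $q\ZZ^2$, hence its length scales up by $q$: to keep it $\ll M^{-1}$ I must shrink, absorbing a factor $q$ into the error, which is why the error becomes $\ll qtM^{-1}$ rather than $\ll tM^{-1}$; the exponent bounds $|\ell_i|\ll |\log t| M^{\eta+1}$ are unaffected (they count multiples of $q$, so the integer $\ell_i$ itself still satisfies the same polynomial bound). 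The main obstacle, and the only non-elementary input, is the effective lower bound $|\ell_1 L_1 + \ell_2 L_2| \gg (\max|\ell_i|)^{-\eta}$ for the linear form in the two logarithms: this is exactly Baker's theorem on linear forms in logarithms of algebraic numbers, and it is what pins down the value of $\eta>1$ and makes the whole construction effective. Everything else — the pigeonhole to get short vectors, the greedy reaching of $\log t$, and the exponentiation estimate $|\e^x-1|\asymp |x|$ for $|x|=O(1)$ — is routine.
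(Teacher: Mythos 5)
Your high-level plan is the same as the paper's: pass to logarithms, view the problem as approximating $\log t$ by the dense subgroup $\ZZ L_1 + \ZZ L_2$, use Minkowski/pigeonhole to produce a short lattice vector, use Baker's bound on linear forms in logarithms to make the construction effective, and translate by many copies of the short vector. But there is a genuine gap in the quantitative core. You assert that for every $N\ge 1$ one can find a nonzero $(\ell_1,\ell_2)\in\ZZ^2$ with $\max(|\ell_1|,|\ell_2|)\le N$ and $|\ell_1 L_1+\ell_2 L_2|\ll N^{-\eta}$ with $\eta>1$, and deduce (taking $N\asymp M^{1/\eta}$) a step of exponent-norm $\ll M^{1/\eta}$ and length $\ll M^{-1}$. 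This is backwards. Pigeonhole/Minkowski only guarantees $|\ell_1 L_1+\ell_2 L_2|\ll N^{-1}$ for $\max|\ell_i|\le N$; Baker's theorem supplies the complementary \emph{lower} bound $\gg N^{-\eta}$ (which is exactly what "polynomially growing partial quotients" gives). Asking for an approximation of quality $N^{-\eta}$ with $\eta>1$ is asking for a lower bound on the irrationality measure of $L_1/L_2$, which neither Baker nor any elementary argument supplies, and which fails outright if $L_1/L_2$ happened to be badly approximable. So the promised step with coefficients $\ll M^{1/\eta}$ need not exist, and the subsequent estimate $|\ell_i^0|\ll|\log t|\,M$ is not obtainable.

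The correct bookkeeping, which is what the paper carries out, reverses the roles of the two inputs. Minkowski gives $(n_1,n_2)$ with $|n_1|,|n_2|\ll M$ and $a:=n_1L_1+n_2L_2$ satisfying $0<|a|\le M^{-1}$; Baker then gives the lower bound $|a|\ge (cM)^{-\eta}$. This lower bound is used to control the scaling: the rescaled step $b:=\lceil M^{-1}/|a|\rceil a$ has length $\asymp M^{-1}$ and the multiplier is $\ll M^{\eta-1}$, so $b$ has exponent-coefficients $\ll M^{\eta}$; after $\ll |\log t|\,M$ translates one lands within $\ll M^{-1}$ of $\log t$ with total coefficients $\ll |\log t|\,M^{\eta+1}$ — the stated bound, sharply, not "with room to spare." (Equivalently one can skip the rescaling: the raw Minkowski vector has length between $M^{-\eta}$ and $M^{-1}$, so $\ll|\log t|\,M^{\eta}$ translates suffice, with the same final coefficient bound.) Your treatment of the exponent-$q$ subgroup — replacing $(\lambda_1,\lambda_2)$ by $(\lambda_1^q,\lambda_2^q)$ and absorbing a factor $q$ into the error — is the paper's approach and is fine once the above is repaired.
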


\begin{proof}
	We set $a_1=\log \lambda_1$ and $a_2=\log \lambda_2$.
	By Minkowski's theorem, for every $M\ge 1$, there exists $(n_1,n_2)\in \Z^2\backslash \{(0,0)\}$ such that 
	\begin{equation}
	\label{eq:d1_0}
	|n_1 a_1+ n_2 a_2|\le M^{-1}\quad\hbox{and}\quad |n_1|,|n_2|\ll M.
	\end{equation}
	We set $a:=n_1a_1+n_2a_2$.
	We note that $a\ne 0$ because $\lambda_1$ and $\lambda_2$ are assumed to be multiplicatively independent. It is clear that we can arrange $a>0$. Let $b:=\lceil \frac{M^{-1}}{a}\rceil a$.
	Then
	\begin{equation}
	\label{eq:mmm}
	M^{-1}\le b< \left(\frac{M^{-1}}{a} +1\right)a\le 2M^{-1}.
	\end{equation}
	It follows from the Baker Theorem (see, for instance, \cite[Ch.~3]{B0}) that there exists $\eta>1$ such that for all
	$(m_1,m_2)\in \Z^2\backslash \{(0,0)\}$,
	\begin{equation}
	\label{eq:d2_0}
	|m_1 a_1+ m_2 a_2|\ge  \max(|m_1|,|m_2|)^{-\eta}.
	\end{equation} 
	Hence, we deduce from \eqref{eq:d1_0} and \eqref{eq:d2_0} that  $\lceil\frac{M^{-1}}{a}\rceil\ll M^{\eta-1}$, so that $b=\ell_1a_1+\ell_2 a_2$ with 
	$|\ell_1|,|\ell_2|\ll M^\eta$. It follows from \eqref{eq:mmm} that
	the set $\{ib: |i|\le LM\}$ forms a $2M^{-1}$-net of the interval $[-L,L]$. Hence, for every $t>0$, there exists $d=i\ell_1a_1+i\ell_2 a_2$
	such that 
	$$
	|d- \log t|\ll M^{-1} \quad\hbox{and}\quad |i\ell_1|,|i\ell_2|\ll |\log t| M^{\eta+1}.
	$$
	This implies that $|e^d-t|\ll t M^{-1}$, as required.
	
	To prove the second part of the lemma, we apply the above argument to the elements $\lambda_1^q$ and $\lambda_2^q$ that belong to the subgroup $S_1$. It follows from \eqref{eq:d1_0}
	that there exists $(n_1,n_2)\in \Z^2\backslash \{(0,0)\}$ such that
	$$
	|n_1 a_1+ n_2 a_2|\le q M^{-1}\quad\hbox{and}\quad |n_1|,|n_2|\ll M.
	$$
	We set $a:=n_1a_1+n_2a_2$ and $b:=\lceil \frac{qM^{-1}}{a}\rceil a$.
	Then it follows from \eqref{eq:d2_0} that $a\ge qM^{-\eta}$. We proceed exactly as in the previous paragraph to prove the second part of the lemma.
\end{proof}	

\begin{proof}[Proof of Proposition \ref{p:dioph alpha}]
We write $x_0=g_0\Gamma_0$ for some $g_0\in G_0$. Then $B=A\cap g_0\Gamma_0 g_0^{-1}$.
It follows that entries of elements in $B$ are eigenvalues of matrices
from $\Gamma_0=\hbox{SL}(d,\ZZ)$. Hence, entries of elements in $B$ are algebraic numbers.
In particular, the group $\alpha(B)$ consists of algebraic numbers.
We apply Lemma \ref{l:dioph eigenvalue} to this group.
It was proven in \cite[Cor.~3.3]{SC} that $\alpha(B)$ is dense in $\mathbb{R}^+$
for every $\alpha\in \Phi(G)$. Since $B$ is finitely generated, this implies
that $\alpha(B)$ must contain two multiplicatively independent elements.
Now Proposition \ref{p:dioph alpha} follows directly from Lemma \ref{l:dioph eigenvalue}.
\end{proof}

\section{Proof of the main theorems}
  
The proof of the main theorems will use the dynamical reformulation of 
the `multiplicatively approximable' property stated in Proposition \ref{p:dyn}.
More explicitly, we will establish that for points $y$ 
in the space of grids $Y$, their orbits $A(R)y$ visit the shrinking sets $\mathcal{W}(\vartheta, (\log_{(5)} R)^{-\zeta})$, provided that $R$ is sufficiently large. As the first step, we use  
the results from Section \ref{sec:rec} to deduce that the projected orbits
$A(R)x$,  with $x=\pi(y)$, in the space of lattices $X$ visit shrinking neighbourhoods of any
given point $x_0$ in $X$. We apply this observation when the point $x_0$ has compact $A$-orbit. This will allow to analyse behaviour of $A$-orbits locally in a neighbourhood of 
the fiber $\pi^{-1}(x_0)$.
The crucial step of the proof is the following proposition:

\begin{prop} \label{p:supp}
Let $x_0\in X$ such that $Ax_0$ is compact and $x\in X$.
We assume that for fixed $\nu,\beta>0$ and all sufficiently large $T$,
\begin{equation}
\label{eq:assump}
\exists a_0\in A:\quad \|a_0\|\le e^{\nu T}\quad\hbox{and}\quad a_0x\in\mathcal{U}_{T^{-\beta}}(x_0)\backslash \mathcal{U}_{T^{-\beta}/2}(x_0).
\end{equation}
Then there exist $\vartheta,\zeta>0$ such that for any $y\in\pi^{-1}(x)$ and
all sufficiently large $R$,
\begin{equation}\label{eq:int}
A(R)y \cap \mathcal{W}(\vartheta,(\log_{(5)}R)^{-\zeta})\ne \emptyset.
\end{equation}
\end{prop}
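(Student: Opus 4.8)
The plan is to combine the recurrence statement \eqref{eq:assump} with the two quantitative density inputs from Section 4 (Theorem \ref{th:wang} and Proposition \ref{p:dioph alpha}) to navigate an $A$-orbit from a general grid $y\in\pi^{-1}(x)$ into the target set $\mathcal{W}(\vartheta,\ve)$. First I would use \eqref{eq:assump}: choosing $T\asymp \log_{(4)} R$ (so $e^{\nu T}$ is still a small power of $R$), we get $a_0\in A$ with $\|a_0\|\le e^{\nu T}$ and $a_0x\in\mathcal{U}_{T^{-\beta}}(x_0)\setminus\mathcal{U}_{T^{-\beta}/2}(x_0)$. Writing $a_0x = u_0 x_0'$ where $x_0'$ is within $T^{-\beta}$ of $x_0$ in the root coordinates, push the grid $a_0 y$ by $a_0$ and record that its lattice $\pi(a_0 y)$ sits in a $\asymp T^{-\beta}$-neighbourhood of the compact orbit point $x_0$, with the displacement living along $\mathcal{U}_{G_0}$. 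The point of insisting on the annulus $\mathcal{U}_{T^{-\beta}}\setminus\mathcal{U}_{T^{-\beta}/2}$ is that the displacement of $a_0 y$ from the fiber $\pi^{-1}(x_0)$ is of size comparable to $T^{-\beta}$ — neither negligible nor too large — which is exactly the regime in which the local analysis below works.

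Next I would carry out the local analysis in a neighbourhood of $\pi^{-1}(x_0)$, decomposing $a_0 y$ relative to the fiber using the root coordinate system of Section \ref{sec:roots}: the lattice part is a small element of $\mathcal{U}_{G_0}$ applied to $x_0$, and there is a translation part $w\in V$ which we may take bounded. The grid $a_0 y$ is thus $\mathcal{O}(T^{-\beta})$-close (in the $\mathcal{U}_G$-coordinates) to some genuine point $y_1\in\pi^{-1}(x_0)$. Now there are two cases according to the arithmetic of $y_1$ inside the torus $\mathbb{R}^d/\Delta_{x_0}$. If $y_1$ is Diophantine of a controlled exponent $k$, apply Theorem \ref{th:wang} with $Q$ roughly of size $e^{T}$: the orbit $B(Q^{k+2})y_1$ is $(\log_{(3)}Q)^{-\sigma}\asymp (\log_{(4)}R)^{-\sigma}$-dense in the fiber, so some $b\in B$ with $\log\|b\|\ll T$ brings $y_1$ (hence $a_0 y$) to within $(\log_{(4)}R)^{-\sigma}$ of a point whose grid visibly lies in $\mathcal{W}(\vartheta,\text{small})$; here one invokes Lemma \ref{l:quant} to know that such a target grid exists with bounded norm vector $v$ of prescribed small $|N(v)|$, and controls the error propagation since $\log\|a_0 b\|\ll T$ gives a total element of $A$ of norm $\le R$ once $T\asymp\log_{(4)}R$. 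If instead $y_1$ fails the Diophantine condition, it is close to a $q$-rational point with $q$ not too large; one then works with $B_1=\mathrm{Stab}_A(y_1')$ of exponent $q$, uses Proposition \ref{p:dioph alpha} together with Lemma \ref{l:quant} to move along root-subgroup orbits — approximating the needed root-subgroup time $t=O_{\pi(y)}(1)$ from Lemma \ref{l:quant} by $\alpha(a)$ for some $a\in B_1$ with $\log\|a\|\ll |\log t|M^{\eta+1}$ — and lands inside $\mathcal{W}(\vartheta,\ve)$ with $\ve$ again a negative power of $\log_{(4)}R$ (or one more logarithm, giving the $\log_{(5)}$ in the final bound) after accounting for the error $\|a_0 b a\|$ staying $\le R$.

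The bookkeeping that fixes the number of iterated logarithms is the following: the recurrence time in \eqref{eq:assump} forces $T\lesssim \log_{(4)}R$ (since we need $e^{\nu T}$, and then $\exp(\mathrm{poly}(q))$ or $\exp(O(T M^{\eta+1}))$, to remain $\le R$), Wang's bound then produces density of quality $(\log_{(3)} T)^{-\sigma}\asymp(\log_{(5)}R)^{-\sigma}$ in the fiber, and the resulting closeness translates into $|N(v)|\ll (\log_{(5)}R)^{-\zeta}$ for the witnessing vector $v\in\Lambda$, with $\|v\|<\vartheta$ for a $\vartheta$ depending only on $x_0$ and $x$. Combining the two cases yields \eqref{eq:int}. \textbf{The main obstacle} is the error control in the local coordinates: one must ensure that the small perturbation $\mathcal{O}(T^{-\beta})$ separating $a_0 y$ from the honest fiber point $y_1$ does not destroy either the density conclusion of Theorem \ref{th:wang} or the root-subgroup navigation of Proposition \ref{p:dioph alpha}, and that after applying group elements of norm up to $\exp(\mathrm{poly}(T))$ the accumulated distortion of this perturbation is still $o(1)$ — which is why $\beta$ must be taken sufficiently large relative to $\nu$ and the Baker exponent $\eta$, and why the annulus (rather than a ball) in \eqref{eq:assump} is essential. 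A secondary technical point is verifying that the Diophantine exponent $k$ of $y_1$ can be taken uniform (or at worst depending on $x$), so that the threshold $Q_0$ and the power $Q^{k+2}$ in Wang's theorem are under control; this is handled by a separate pigeonhole/measure argument on the fiber which I would state as a preliminary lemma.
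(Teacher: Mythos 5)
Your high-level architecture agrees with the paper (land near $x_0$ via \eqref{eq:assump}, split into a Diophantine case handled by Theorem \ref{th:wang} and a rational case handled by Proposition \ref{p:dioph alpha} together with Lemma \ref{l:quant}, then control perturbations), but several of the quantitative decisions you propose are wrong in ways that would break the argument.

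First, the Diophantine exponent $k$ of $w$ (your $y_1$) \emph{cannot} be taken uniform, and no pigeonhole or measure argument on the fiber can save this: the statement must hold for \emph{every} $y\in\pi^{-1}(x)$, hence for every translation vector $w\in V/\Delta_{x_0}$, and any fixed Diophantine threshold is violated by some $w$. What the paper does instead is take $k=k(T)\to\infty$ as a \emph{parameter}, and then there is a genuine third case you omit — $w$ close to a $q$-rational point with $q$ \emph{large} (beyond the cutoff $L=L(T)$), which is handled neither by Wang's theorem nor by the $B_1$-navigation but by a separate perturbation step that moves $w$ off the bad set $\mathcal{D}_{x_0}(k,L)$ using the lower bound $\|w\|\ge T^{-\beta}$ (itself secured by a normalisation $w=0$ or $\|w\|\ge T^{-\beta}$ that you never set up). Without this case the proof has a hole: points that are neither $k(T)$-Diophantine nor close to a small-period rational are simply unaccounted for.

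Second, the parameter bookkeeping is off. The paper takes $T\asymp\log R$, not $T\asymp\log_{(4)} R$, and it does \emph{not} take $Q\asymp e^T$: because one must apply the perturbation Lemma \ref{l:quant1} after moving by $a\in B(Q^{k+2})$, the constraint is $Q^{k+2}\lesssim T^{\beta/(2d)}$, so $\log Q\asymp (\log T)/k$ and $Q$ is tiny. The five iterated logarithms are then forced: with $k=\frac{\log T}{\kappa\log\log T}$ one gets $\log Q\asymp\log\log T$, hence $\log_{(3)}Q\asymp\log_{(4)}T\asymp\log_{(5)}R$. Your arithmetic (e.g.\ $(\log_{(3)}T)^{-\sigma}\asymp(\log_{(5)}R)^{-\sigma}$ with $T\asymp\log_{(4)}R$) does not add up. Also, $\beta$ is given in the hypothesis of the proposition and cannot be ``taken sufficiently large relative to $\nu$ and $\eta$.'' Finally, the role of the annulus is more specific than ``the right regime'': the lower bound $|t_{\alpha_0}|\ge T^{-\beta}/2$ on some root coordinate of $g_0$ is what guarantees, in the rational case, that the element $b=a^{i|B:B_1|}\in B_1$ chosen to contract the root coordinates satisfies $\|b\|\le T^{O(1)}$; without it $b$ could be arbitrarily large and the accumulated element $a_0 b_3 b_2 b$ would exceed $A(R)$.
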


We begin by investigating how the recurrence property in Proposition \ref{p:supp} changes under small perturbations of the base point $y$. It will be convenient to consider the family of neighbourhoods of $y$ in $Y$ defined by $\mathcal{O}_\ve(y)\defeq \mathcal{O}_G(\ve)y$,
where $\mathcal{O}_G(\ve)$ is defined in \eqref{eq:O}.

\begin{lem}\label{l:quant1}
	Let $0<\ve<1$, $\ve^{1/2}\le\ve_1<\ve_2$, $\vartheta\ge 1$, and $a\in A(\ve^{-1/(2d)})$. 	Then for every $y\in Y$, if 
	\begin{equation}
	\label{eq:ass1}
	a y\in \mathcal{W}(\vartheta,\ve_1,\ve_2),
	\end{equation}
	then for all $y'\in \mathcal{O}_\ve (y)$,
	$$
	a y'\in \mathcal{W}(3\vartheta,c_1\ve_1,c_2\ve_2)
	$$
	with some $c_1,c_2>0$, depending only on $\vartheta$.
\end{lem}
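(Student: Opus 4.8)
The plan is to track a short vector in the grid $\Lambda_y$ under the small perturbation $y' \in \mathcal{O}_\varepsilon(y)$ and check that it remains short and that its norm-product $N(\cdot)$ stays within a bounded multiple of the original interval. First I would unravel the hypothesis: by \eqref{eq:ass1} there exists $v \in \Lambda_{ay}$ with $\|v\| < \vartheta$ and $\varepsilon_1 < |N(v)| < \varepsilon_2$. Writing $y' = g y$ with $g = (v_0, g_0) \in \mathcal{O}_G(\varepsilon)$ (so $\|v_0\| < \varepsilon$, $\|g_0 - e\| < \varepsilon$), the grid $\Lambda_{ay'} = a g a^{-1} \Lambda_{ay}$, and the key point is to control the conjugated element $a g a^{-1}$. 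Its linear part is $a g_0 a^{-1}$ and its translation part is $a v_0$. Since $a \in A(\varepsilon^{-1/(2d)})$ and $a$ is diagonal with positive entries of product $1$, each entry $a_i$ satisfies $a_i \le \varepsilon^{-1/(2d)}$ and $a_i^{-1} = \prod_{j \ne i} a_j \le \varepsilon^{-(d-1)/(2d)} \le \varepsilon^{-1/2}$, so the conjugation $a(\cdot)a^{-1}$ expands entrywise by at most a factor $\varepsilon^{-1/2} \cdot \varepsilon^{-1/(2d)}$; in particular $\|a g_0 a^{-1} - e\| \ll \varepsilon^{1/2 - 1/(2d)} \cdot \text{(entry-dependent)}$, which tends to $0$, and $\|a v_0\| \ll \varepsilon^{1/2}$ as well (up to constants absorbed into $O(1)$).

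Next I would set $v' := (a g a^{-1}) v \in \Lambda_{ay'}$ and estimate. For the norm: $v' = a g_0 a^{-1} v + a v_0$, so $\|v' - v\| \le \|a g_0 a^{-1} - e\| \|v\| + \|a v_0\| \ll \vartheta \varepsilon^{1/2 - 1/(2d)} + \varepsilon^{1/2}$, which is $\le \vartheta$ for $\varepsilon$ small (and in any case $\le 2\vartheta$), giving $\|v'\| < 3\vartheta$ with room to spare. For the norm-product: $N$ is a polynomial in the coordinates, homogeneous of degree $d$, and on the region $\|v\| < \vartheta$ its gradient is bounded by $O(\vartheta^{d-1})$, so $|N(v') - N(v)| \ll \vartheta^{d-1} \|v' - v\| \ll \vartheta^{d-1}(\vartheta \varepsilon^{1/2-1/(2d)} + \varepsilon^{1/2}) \ll_\vartheta \varepsilon^{1/2}$. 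Since $\varepsilon_1 \ge \varepsilon^{1/2}$, this perturbation is at most a constant (depending on $\vartheta$) times $\varepsilon_1$; choosing that constant appropriately we get $|N(v')| > \varepsilon_1 - C_\vartheta \varepsilon^{1/2} \ge c_1 \varepsilon_1$ with, say, $c_1 = 1/2$ after possibly shrinking, and $|N(v')| < \varepsilon_2 + C_\vartheta \varepsilon^{1/2} \le c_2 \varepsilon_2$ since $\varepsilon^{1/2} \le \varepsilon_1 < \varepsilon_2$. One should also note $N(v') \ne 0$, which follows since $|N(v')| \ge c_1 \varepsilon_1 > 0$. Hence $a y' = (a g a^{-1})(a y) \in \mathcal{W}(3\vartheta, c_1\varepsilon_1, c_2\varepsilon_2)$.

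The main technical obstacle is purely bookkeeping: making the constants in the entrywise conjugation estimate genuinely uniform, i.e. verifying that the factors $a_i / a_j$ appearing in $a g_0 a^{-1}$ are all bounded by a fixed power of $\varepsilon^{-1}$ for $a \in A(\varepsilon^{-1/(2d)})$. This uses only $a_i \le \varepsilon^{-1/(2d)}$ together with $\prod_i a_i = 1$ (forcing $a_i^{-1} = \prod_{j\ne i} a_j \le \varepsilon^{-(d-1)/(2d)}$), so $a_i a_j^{-1} \le \varepsilon^{-1/(2d)} \cdot \varepsilon^{-(d-1)/(2d)} = \varepsilon^{-1/2}$; thus $\|a g_0 a^{-1} - e\| \le \varepsilon^{-1/2}\|g_0 - e\| < \varepsilon^{1/2}$ and likewise $\|a v_0\| \le \varepsilon^{-(d-1)/(2d)}\|v_0\| < \varepsilon^{1/(2d)}$. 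With these bounds in hand the two estimates of the previous paragraph go through with all implied constants depending only on $d$ and $\vartheta$, and the lemma follows.
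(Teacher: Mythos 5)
Your proof is correct and follows essentially the same approach as the paper: write $ay' = (aha^{-1})(ay)$, bound the conjugated perturbation by $O(\varepsilon^{1/2})$ using $a\in A(\varepsilon^{-1/(2d)})$ together with $\prod_i a_i=1$, and push the witnessing short vector through to control both $\|\cdot\|$ and $N(\cdot)$. (A minor arithmetic slip: the exponents $\varepsilon^{1/2-1/(2d)}$ in your second paragraph and $\varepsilon^{1/(2d)}$ in your last should read $\varepsilon^{1/2}$ and $\varepsilon^{1-1/(2d)}$ respectively --- as you in fact correctly state elsewhere --- but this does not affect the argument.)
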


\begin{proof}
Since $y'\in \mathcal{O}_\ve (y)$, we can write $y'=hy$ with $h\in \mathcal{O}_G(\ve)$.
The element $h$ can be written as $h=(v,g)$ with $v\in V$ satisfying $\|v\|<\ve$ and $g\in G_0$ satisfying $\|g-e\|< \ve$. Then $ay'=(aha^{-1}) ay$ where $aha^{-1}=(av, aga^{-1})$.
We observe that for $x\in\hbox{Mat}(d,\R)$, we have
$$
\|axa^{-1}\|\le \|a\|\cdot \|a^{-1}\|\cdot \|x\|\le \|a\|^d\cdot \|x\|,
$$
so that since $a\in A(\ve^{-1/(2d)})$, we deduce that
$$
\|aga^{-1} -e\|\le \ve^{-1/2} \|g-e\|<\ve^{1/2}.
$$
Also $\|av\|\le \|a\|\,\|v\|<\ve^{1/2}$.

According to our assumption \eqref{eq:ass1}, there exists $z\in \Lambda_{ay}$
such that $\|z\|<\vartheta$ and $\ve_1<N(z)<\ve_2$. 
Then the vector $w:=(aha^{-1}) z$ belongs to $\Lambda_{ay'}$, and
$$
w= (aga^{-1})z+av=z + ((aga^{-1})-e) z+av.  
$$
This implies that $\|w\|<3\vartheta$, and $w=z+O_\vartheta(\ve^{1/2})$,
so that $N(w)=N(z)+O_\vartheta(\ve^{1/2})$. Hence, $ay'\in \mathcal{W}(3\vartheta,c_1\ve_1,c_2\ve_2)$ for some $c_1,c_2>0$, depending only on $\vartheta$.
\end{proof}

\begin{proof}[Proof of Proposition \ref{p:supp}]
We write 
$a_0x=g_0x_0$
with $g_0\in \scrU_{G_0}(T^{-\beta})\backslash \scrU_{G_0}(T^{-\beta}/2)$.
The element $g_0$ has a decomposition
$$
g_0=c\prod_{\alpha\in\Phi(G_0)} u_\alpha(t_\alpha),
$$
where
$c\in A$ with $\|c-e\|<T^{-\beta}$, $|t_\alpha|< T^{-\beta}$ for all $\alpha$,
and $|t_{\alpha_0}|\geq T^{-\beta}/2$ for some $\alpha_0\in \Phi(G_0)$.
In particular, $g_0=e+O(T^{-\beta})$.
For every $y\in \pi^{-1}(x)$, 
 \begin{equation}
 \label{eq:start}
 a_0y=g_0 y_0
 \end{equation}
 with some $y_0\in \pi^{-1}(x_0)$.
The point $y_0$ corresponds to the grid $\Delta_{x_0}+g_0^{-1}w$ with some $w\in V$.
Since $g_0$ is bounded, $w$ can be chosen to lie in a fixed
bounded subset of $V$, depending only on $\Delta_{x_0}$. 
Although we don't have any control over $w$, 
we may assume (after modifying \eqref{eq:start}) that either
\begin{equation}
\label{eq:asss}
\|w\|\ge T^{-\beta}\quad\hbox{ or } \quad w=0.
\end{equation}
Indeed, suppose that $\|w\|< T^{-\beta}$. 
Then $\|g_0^{-1} w\|\ll T^{-\beta}$, so that
$g_0^{-1} w=\prod_{\alpha\in \Phi(V)} u_\alpha(t_\alpha)$ with $|t_\alpha|\ll T^{-\beta}$.
The element $g:=(g_0^{-1}w,g_0) \in G$ can be written as
\begin{equation}
\label{eq:g}
g=\left(\prod_{\alpha\in \Phi(V)} u_\alpha(t_\alpha)\right) c\left(\prod_{\alpha\in\Phi(G_0)} u_\alpha(t_\alpha)\right),
\end{equation}
and we can replace \eqref{eq:start} by the equation 
\begin{equation}
\label{eq:start2}
a_0y=g y_0',
\end{equation}
where the point $y_0'$
corresponds to the grid $\Delta_{x_0}$.  
On the other hand, if $\|w\|\ge T^{-\beta}$, we leave \eqref{eq:start} as it is.
We conclude that in any case we can obtain \eqref{eq:start2} with $g$ as in \eqref{eq:g},
where $y_0'$ corresponds to a grid $\Delta_{x_0}+g^{-1}w$ with $w$ satisfying
\eqref{eq:asss}. We note that $g=e+O(T^{-\beta})$, and $g$ has its $G_0$-component equal to $g_0$.

Since behaviour of the orbit $Ay$ depends crucially on 
the Diophantine properties of the vector $w$ with respect to the lattice $\Delta_{x_0}$, the proof naturally split into the following
three subcases:
\begin{enumerate}
	\item[1.] {\it $w$ is Diophantine:} for every $q\ge 2$ and $w_0\in \Delta_{x_0}$, $|qw-w_0|\ge c(x_0)\, q^{-k+1}$, where $c(x_0)$ is as in Theorem \ref{th:wang}.
	\item[2.] {\it $w$ is close to a torsion point with small period:} there exist $q\ge 2$ with $q\le L$ and $w_0\in \Delta_{x_0}$ such that $|qw-w_0|< c(x_0)\, q^{-k+1}$.
	\item[3.] {\it $w$ is close to a torsion point with large period:}
	for every $q\ge 2$ with $q\le L$ and $w_0\in \Delta_{x_0}$, $|qw-w_0|\ge c(x_0)\, q^{-k+1}$, but there exist $q\in \NN$ and $w_0\in \Delta_{x_0}$ such that $|qw-w_0|< c(x_0)\, q^{-k+1}$.
\end{enumerate}
The parameters $k$ and $L$ appearing above will be chosen of the form $k=k(T)\to \infty$ and $L=L(T)\to \infty$ as $T\to\infty$, and they will be specified in the course of the proof.

\vspace{0.1cm}

Now we investigate each of these cases separately:

\vspace{0.4cm}

\noindent \underline{Case 1:} {\it $w$ is Diophantine.}
It follows from Theorem \ref{th:wang} that 
for sufficiently large $Q$, the set $B(Q^{k+2})w$ is $(\log_{(3)} Q)^{-\sigma}$-dense in 
$V/\Delta_{x_0}$. We observe that $g^{-1}w=w+O(T^{-\beta})$.
Let us assume that $Q^{k+2}\le T^{\beta/2}$ (in fact, later in the proof we will have
to impose much stronger restriction).
Then the set $B(Q^{k+2})g^{-1}w$ is $2(\log_{(3)} Q)^{-\sigma}$-dense in 
$V/\Delta_{x_0}$, when $Q$ is sufficiently large. We observe that for $a\in B=\hbox{Stab}_A(x_0)$, 
$$
a\Lambda_{y_0}=\Lambda_{ay_0}=a(\Delta_{x_0}+g^{-1}w)=\Delta_{x_0}+ag^{-1}w.
$$
Hence, the set $B(Q^{k+2})\Lambda_{y_0}$ is also $2(\log_{(3)} Q)^{-\sigma}$-dense in $V$. In particular, this implies that 
there exist $a\in B(Q^{k+2})$ and $z\in \Lambda_{ay_0}$ such that 
$$
(\log_{(3)} Q)^{-\sigma}<z_i<3(\log_{(3)} Q)^{-\sigma}\quad\hbox{for every $i$.}
$$
These inequalities imply that 
$$
\|z\|< 1\quad\quad\hbox{and}\quad\quad
(\log_{(3)} Q)^{-d\sigma}<N(z)<3^d(\log_{(3)} Q)^{-d\sigma},
$$
so that
\begin{equation}\label{eq:WWW}
ay_0\in \mathcal{W}(1,(\log_{(3)} Q)^{-d\sigma}, 3^d(\log_{(3)} Q)^{-d\sigma})
\end{equation}
for all sufficiently large $Q$.

Next we claim that an analogous inclusion holds for the point $a gy_0$ as well.
Since $gy_0\in \mathcal{O}_{c_0\,T^{-\beta}}(y_0)$ for some fixed $c_0>0$ (see \eqref{eq:inclusion})
and $a\in B(Q^{k+2})$, we will be able to deduce this
from Lemma \ref{l:quant1} provided that $Q$ is not too large. 
Taking this into account, we choose $Q$ so that $Q^{k+2}=(c_0\,T^{\beta})^{1/(2d)}$.
We note that if $k=o(\log T)$ as $T\to\infty$, then $Q\to \infty$ as $T\to\infty$ and \eqref{eq:WWW} holds for all sufficiently large $T$.
Then Lemma \ref{l:quant1} implies that for some $c_1,c_2>0$,
$$
aa_0y=agy_0\in \mathcal{W}(3,c_1(\log_{(3)} Q)^{-d\sigma}, c_2(\log_{(3)} Q)^{-d\sigma}).
$$  
Since $Q^{k+2}=(c_0\,T^{\beta})^{1/(2d)}$,
$$
aa_0\in B(Q^{k+2})A(e^{\nu T})\subset A(e^{(\nu+1)T}).
$$
Hence, taking $R=e^{(\nu+1)T}$,  
we deduce that for all sufficiently large $R$,
\begin{equation}
\label{eq:RRR}
A(R)y\cap \mathcal{W}(3, f(R)^{-d\sigma})\ne \emptyset,
\end{equation}
where
\begin{align*}
f(R)\gg \log_{(3)} \left( c_0^{\frac{1}{2d(k+2)}} \left(\frac{\log R}{\nu+1}\right)^{\frac{\beta}{2d(k+2)}} \right)\gg
\log \left( \log_{(3)} R-\log k \right).
\end{align*}
In order for \eqref{eq:RRR} to give a non-trivial estimate, we have to choose $k$ such that
\begin{equation}
\label{eq:k1}
\log k= \log_{(3)} R -s(R)\quad\hbox{ with $s(R)\to \infty$.}
\end{equation}
We note that this choice of $k$, in particular, implies that $k=o(\log T)$, which we have used above. Finally, we deduce from \eqref{eq:RRR} that 
for all sufficiently large $R$,
\begin{equation}
\label{eq:RRR1}
A(R)y\cap \mathcal{W}(3, O((\log s(R))^{-d\sigma}))\ne \emptyset.
\end{equation}

\vspace{0.4cm}

\noindent \underline{Case 2:} {\it $w$ is close to a torsion point with small period.}
We start by modifying equation \eqref{eq:start2}. We observe that 
$$
g=c\left(\prod_{\alpha\in \Phi(V)} u_\alpha(t'_\alpha)\right) \left(\prod_{\alpha\in\Phi(G_0)} u_\alpha(t_\alpha)\right),
$$
where $t_\alpha'=\alpha(c)^{-1} t_\alpha=O(T^{-\beta})$.
Replacing $a_0$ by $a_0c^{-1}$, we may assume without loss of generality that
\eqref{eq:start2} holds with $c=e$.
We denote by $y_0''$ the element of $Y$ that corresponds to the grid $\Delta_{x_0}+q^{-1}w_0$.
Then 
\begin{equation}
\label{eq:first}
a_0y=g y'_0=  hy_0''\quad\hbox{ where $h:=(g^{-1}w-q^{-1}w_0,e)g\in G$. }
\end{equation}
We observe that 
$$
\|g^{-1}w-q^{-1}w_0\|\le \|g^{-1}w-w\|+\|w-q^{-1}w_0\|\ll \max( T^{-\beta}, q^{-k}).
$$
We decompose $h$ into a product with respect to root subgroups:
\begin{equation}
\label{eq:prod0}
h=\prod_{\alpha\in \Phi(G)} u_\alpha(t_\alpha),
\end{equation}
where
$|t_\alpha|\ll \max(T^{-\beta},q^{-k})$ for all roots $\alpha$.
We recall that $k$ is chosen so that $k=o(\log T)$ (see Case 1),
Hence, it follows that $|t_\alpha|\ll 2^{-k}$ for all roots $\alpha$, when $T$
is sufficiently large. 
Let $B_1:=\Stab_A(y_0'')$. Since $B_1$ is precisely the stabiliser in $B$ 
of the $q$-rational point $q^{-1}w_0$ in $V/\Delta_{x_0}$, it follows that
$|B:B_1|\le q^d$. We observe that for $b\in B_1$, we have 
\begin{equation}
\label{eq:conj}
bhy_0''= (bhb^{-1})y_0''= \left(\prod_{\alpha\in \Phi(G)} u_\alpha(\alpha(b) t_\alpha)\right) y_0''.
\end{equation}
Our argument is based on picking suitable elements $b\in B_1$ which 
contract some of the coordinates $t_\alpha$. This will allow to reduce complexity of the product in \eqref{eq:conj}. A useful tool for achieving this is the following elementary lemma.

\begin{lem}
	\label{l:linear}
	Let $v_1,\ldots, v_s$ be a collection of distinct vectors in a vector space $V$.
	Assume that there exists $L\in V^*$ such that $L(v_i)>0$ for all $i$.
	Then there exists $v_j$ such that 
	\begin{itemize}
	\item for some $S_1\in V^*$, $S_1(v_{j})>0$ and $S_1(v_{i})<0$ with $i\ne j$,
	\item for some $S_2\in V^*$, $S_2(v_{j})=0$ and $S_2(v_{i})<0$ with $i\ne j$.
	\end{itemize}
\end{lem}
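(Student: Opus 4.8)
The plan is to run a greedy/maximality argument on the functionals that separate a vertex from all the others. First I would pass to the convex geometry picture: let $P$ denote the convex hull of $\{v_1,\dots,v_s\}$ in $V$. Since there is $L\in V^*$ with $L(v_i)>0$ for all $i$, the origin is not in $P$, so we may as well work in the affine hyperplane $\{L=1\}$ (rescale each $v_i$ by $1/L(v_i)>0$; note rescaling by a positive constant does not affect any of the sign conditions in the two bullet points, since those are statements about $S_1(v_i), S_2(v_i)$ being $>0$, $<0$ or $=0$). So without loss of generality all $v_i$ lie in an affine hyperplane $H=\{L=1\}$, and they are still distinct. Now the goal is to find a vertex $v_j$ of the simplex/polytope they span which is ``exposed'' in the strong sense demanded: a hyperplane through $v_j$ with all the other $v_i$ strictly on one side (that is bullet two, $S_2$), and a parallel shift of it putting $v_j$ strictly on the positive side and everyone else strictly on the negative side (bullet one, $S_1$ — indeed once $S_2$ is found with $S_2(v_j)=0>S_2(v_i)$, one simply takes $S_1=S_2+\delta L$ for small $\delta>0$, so that $S_1(v_j)=\delta>0$ while $S_1(v_i)=S_2(v_i)+\delta<0$ for $\delta$ smaller than $\min_i(-S_2(v_i))$). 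Hence the whole lemma reduces to producing a single affine functional on $H$ vanishing at one of the points and strictly negative at all the others.

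Next I would produce such a functional by a maximality argument. Consider all affine functionals $\phi$ on $H$ (equivalently, restrictions to $H$ of elements of $V^*$) with $\phi(v_i)\le 0$ for every $i$ and $\phi(v_i)=0$ for at least one $i$; the zero functional qualifies trivially, so this family is nonempty. Among such $\phi$, choose one for which the \emph{zero set} $Z(\phi):=\{i: \phi(v_i)=0\}$ is minimal (smallest cardinality, $\ge 1$). The claim is that $|Z(\phi)|=1$, which then furnishes the desired $S_2$. Suppose for contradiction that $Z(\phi)$ contains two distinct indices, say both $v_a$ and $v_b$ lie on the hyperplane $\{\phi=0\}$ inside $H$. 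Since the $v_i$ are distinct points, I can find an affine functional $\psi$ on $H$ with $\psi(v_a)>0>\psi(v_b)$. Now look at the family $\phi+\epsilon\psi$ for small $\epsilon>0$: for indices $i$ with $\phi(v_i)<0$, the sign stays negative for all sufficiently small $\epsilon$; for indices $i\in Z(\phi)$, the sign of $(\phi+\epsilon\psi)(v_i)$ is the sign of $\psi(v_i)$. If it should happen that $\psi(v_i)\le 0$ for all $i\in Z(\phi)$ (with strict somewhere, namely at $b$), then $\phi+\epsilon\psi$ is a legal functional with strictly smaller zero set — contradiction. Otherwise $\psi$ takes both signs on $Z(\phi)$; then replacing $\psi$ by a suitable positive combination / using a standard ``tilt until the first new vertex drops off'' step (continuously increase $\epsilon$ from $0$ and stop at the first time some $\phi(v_i)+\epsilon\psi(v_i)$ would cross $0$ from below — i.e.\ take $\epsilon^\ast=\min\{-\phi(v_i)/\psi(v_i): \psi(v_i)>0\}$ over the finitely many $i$) produces a new legal $\phi'=\phi+\epsilon^\ast\psi$ whose zero set is a proper subset of the old one unless it has picked up a new index, but by construction it retains at most those $i\in Z(\phi)$ with $\psi(v_i)=0$ together with possibly one new index, and one checks it is strictly smaller because it \emph{loses} $a$ (where $\psi(v_a)>0$ forces $\phi'(v_a)<0$). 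Either way we contradict minimality, so $|Z(\phi)|=1$.

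Finally, with $|Z(\phi)|=1$, say $Z(\phi)=\{j\}$, set $S_2\in V^*$ to be any extension of $\phi$ from $H$ to $V$; then $S_2(v_j)=\phi(v_j)=0$ and $S_2(v_i)=\phi(v_i)<0$ for $i\ne j$ (note $v_i\in H$ so evaluating $S_2$ on $v_i$ agrees with evaluating $\phi$). This is the second bullet. For the first bullet, set $S_1:=S_2+\delta L$ with $0<\delta<\min_{i\ne j}(-S_2(v_i))$: then $S_1(v_j)=\delta L(v_j)>0$ (recall $L(v_j)>0$) while $S_1(v_i)=S_2(v_i)+\delta L(v_i)=S_2(v_i)+\delta<0$ for $i\ne j$, using $L(v_i)=1$ after the rescaling. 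Undoing the rescaling at the end changes each $v_i$ back by a positive scalar, which does not affect any of the strict/zero sign conditions, so the original $v_j$ works for the original vectors. The main obstacle is making the ``tilt'' step in the middle paragraph fully rigorous — one must be careful that the perturbation genuinely \emph{shrinks} the zero set rather than trading one active index for another — but this is handled cleanly by the stopping-time choice of $\epsilon^\ast$ together with the observation that the index $a$ with $\psi(v_a)>0$ is definitely expelled; everything else is elementary linear algebra.
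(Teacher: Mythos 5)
Your overall strategy is the same as the paper's: normalise to the affine slice $\{L=1\}$ (this uses the key observation, which you state correctly, that the sign conditions in the lemma are invariant under replacing each $v_i$ by a positive multiple), work with the convex hull of the rescaled points, and find an exposed vertex; $S_1$ is then obtained from $S_2$ by a small translate $S_2 + \delta L$, exactly as the paper does in reverse. The paper simply takes $\bar v_j$ to be an extreme point of the convex hull and invokes the fact that extreme points of a polytope are exposed. You instead try to prove exposedness from scratch by a minimality argument on the ``active set'' $Z(\phi)$ of supporting functionals $\phi\le 0$. That idea is sound, but your implementation of the tilt step has a genuine sign error and does not run.

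Concretely: having chosen $\psi$ with $\psi(v_a)>0>\psi(v_b)$ for $a,b\in Z(\phi)$, you set $\epsilon^\ast=\min\{-\phi(v_i)/\psi(v_i): \psi(v_i)>0\}$. But $a$ itself is one of the indices with $\psi(v_i)>0$, and since $\phi(v_a)=0$ this gives $-\phi(v_a)/\psi(v_a)=0$, so $\epsilon^\ast=0$ and $\phi'=\phi$ --- no progress. If one insists on $\epsilon>0$ instead, the first constraint to be violated is precisely $a$: one would have $\phi'(v_a)=\epsilon\psi(v_a)>0$, which breaks the requirement $\phi'\le 0$. In particular your stated justification, ``$\psi(v_a)>0$ forces $\phi'(v_a)<0$,'' has the sign backwards. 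The underlying problem is that any $\psi$ that takes both signs on $Z(\phi)$ is an illegal tilt direction: whichever sign you give $\epsilon$, some element of $Z(\phi)$ is immediately pushed to the forbidden side. The correct tilt must be chosen to be $\le 0$ on all of $Z(\phi)$: for instance pick any affine functional $\psi_0$ that is non-constant on the affine span of $\{v_i:i\in Z(\phi)\}$ (possible since $|Z(\phi)|\ge 2$ and the rescaled points are distinct), and set $\psi=\psi_0-\max_{i\in Z(\phi)}\psi_0(v_i)$; then $\psi\le 0$ on $Z(\phi)$ with equality at the argmax and strict inequality at the argmin, and for small $\epsilon>0$ the functional $\phi+\epsilon\psi$ is admissible with a strictly smaller, non-empty active set. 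With that repair your argument is a complete elementary proof that bypasses the ``extreme $\Rightarrow$ exposed'' fact; but as written the key step fails. (One further remark, affecting both proofs and really the statement of the lemma: the distinctness hypothesis is not quite enough --- if two $v_i$ happen to be positive scalar multiples of one another they collapse to the same point after rescaling and no single functional can separate them; in the paper's application the $v_i$ are roots of $A$, no two of which are positive multiples, so this degeneracy never occurs.)
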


\begin{proof}
Let $\bar v_i$ denote the positive multiple of $v_i$ such that $L(\bar v_i)=1$.
We denote by $\mathcal{C}$ the closed convex hull of the points $\bar v_i$. 
Let $\bar v_{j}$ be an extreme point of $\mathcal{C}$.
Then there exists a hyperplane $\mathcal{H}$ in $L=1$ which separates 
$v_j$ and $v_i$, $i\ne j$. It is sufficient to pick $S_1\in  V^*$ such that $\{S_1=0\}\cap \{L=1\}=\mathcal{H}$ with a suitable sign. The proof of the second part is similar.
\end{proof}	

We note that conjugating by elements from $B_1$, one can only achieve precision $e^{O(q^d)}$
(cf. Lemma \ref{l:projection}), but the coordinates $t_\alpha$ are of order $O(2^{-k})$,
so that our argument, which is presented below, could only work provided that $k\ge O(q^d)$. Since $q\le L$, 
it is sufficient to assume that the parameter
$L=L(T)$ satisfies
\begin{equation}
\label{eq:L1}
L^d=o(k)\quad \hbox{as $T\to \infty$.}
\end{equation}

In view of \eqref{eq:conj}, we have to analyse behaviour of 
$\alpha(b)t_\alpha$, $\alpha\in \Phi(G)$, with $b\in B_1$.
Now we construct an explicit $b\in B_1$ which contracts some of the factors in \eqref{eq:conj}.
Since $B$ is a lattice in $A$, there exists an element $a\in B$ such that $\alpha(a)\ne 1$
for all $\alpha\in\Phi(G)$. In particular, we have a decomposition 
$$
\Phi (G)=\Phi^+\sqcup \Phi^- \quad \hbox{ where $\Phi^+:=\{\alpha:\, \alpha(a)>1\}$ and $\Phi^-:=\{\alpha:\, \alpha(a)<1\}$.}
$$
This decomposition is non-trivial because $\prod_{\alpha} \alpha(a)=1$.
We recall that there exists $\alpha_0\in \Phi(G_0)$
such that $|t_{\alpha_0}|\ge T^{-\beta}/2$. 
Replacing $a$ by $a^{-1}$ if necessary we may assume that $\alpha_0\in \Phi^+$.
Let us pick the maximal exponent $i$ such that 
$\alpha(a)^{i|B:B_1|}|t_\alpha|\le 1$ for all  $\alpha\in \Phi(G)$
and set $b:=a^{i|B:B_1|}$. Clearly, $b\in B_1$.
Also since $\alpha_0(a)^{i|B:B_1|} T^{-\beta}/2\le 1$,
it follows that $i|B:B_1|\ll \log T$, so that 
$$
\|b\|\le T^{O(1)}.
$$
It follows from our choice of the exponent $i$ that there exists $\alpha_1\in \Phi(G)$ such that 
$$
\alpha_1(a)^{(i+1)|B:B_1|}|t_{\alpha_1}|= \alpha_1(b)\alpha_1(a)^{|B:B_1|}|t_{\alpha_1}| > 1.
$$
Hence, 
\begin{equation}
\label{eq:alpha1}
\alpha_1(b)|t_{\alpha_1}|\ge e^{-O(q^d)}.
\end{equation}
On the other hand, for all $\alpha\in\Phi^-$, we have $\alpha(b)|t_{\alpha}|<|t_{\alpha}|= O(2^{-k})$.
We conclude that
\begin{equation}
\label{eq:prod}
h_1:=bhb^{-1}=\prod_{\alpha\in \Phi(G)} u_\alpha(s_\alpha),
\end{equation}
where $|s_\alpha|\le 1$ for all $\alpha$, $|s_{\alpha_1}|\ge e^{-O(q^d)}$, and 
$|s_\alpha|\le  \omega\,2^{-k}$
for fixed $\omega>0$ and all $\alpha\in \Phi^-$.

Let us introduce a parameter $\zeta\in (0,1)$ which will be specified later (see \eqref{eq:zeta} below). 
Since the number of roots $\alpha$ is finite there exists $\ell\in \NN$ such that 
no coordinates $|s_\alpha|$ are contained in the interval $((\omega\,2^{-k})^{\zeta^{\ell-1}}, (\omega\,2^{-k})^{\zeta^{\ell}}]$.
We decompose the set of roots as $\Phi(G)=\Phi_1\sqcup \Phi_2$ where
$\Phi_1$ consists of $\alpha$ such that $|s_\alpha|\ge (\omega\,2^{-k})^{\zeta^{\ell}}$,
and $\Phi_2$ consists of $\alpha$ such that $|s_\alpha|\le (\omega\,2^{-k})^{\zeta^{\ell-1}}$.
We note that $\Phi^-\subset \Phi_2$.
Also, it follows from \eqref{eq:alpha1} and \eqref{eq:L1} that $\alpha_1\in \Phi_1$.
In particular, $\Phi_1$ is not empty.
We observe that for $\alpha\in \Phi_2$ and bounded $g\in G$, we have
$g u_\alpha(s_\alpha)=vg$ where $v=e+O(2^{-\zeta^{\ell-1} k})$. Therefore, we can rearrange the terms
in the product \eqref{eq:prod0}, so that 
\begin{equation}
\label{eq:prod}
h_1=v_1 h_2\quad\quad \hbox{where}\quad v_1=e+O(2^{-\zeta^{\ell-1} k})\;\;\hbox{and}\;\; h_2:=\prod_{\alpha\in \Phi_1} u_\alpha(s_\alpha).
\end{equation}
Now we apply Lemma \ref{l:linear} to the set $\Phi_1$ considered as a subset of the dual $A^*$ of $A$.
The condition of the lemma holds because $\Phi_1\subset \Phi^+$.
Hence, we deduce that there exist $a_1,a_2\in A=(A^*)^*$ and $\alpha_2\in \Phi_1$ such that
$\alpha_2(a_1)>1$ and $\alpha(a_1)<1$ for all $\alpha\in\Phi_1\backslash \{\alpha_2\}$,
and $\alpha_2(a_2)=1$ and $\alpha(a_2)<1$ for all $\alpha\in\Phi_1\backslash \{\alpha_2\}$.
Rescaling $a_1$, we arrange that $\alpha_2(a_1)=|s_{\alpha_2}|^{-1}$.
Since $|s_{\alpha_2}|^{-1}\ll 2^{\zeta^\ell k}$, there exists a constant $c_1>0$ such that
$\|a_1\|\le 2^{c_1 \zeta^\ell k}$. Moreover, $c_1$ depends only on the initial choice
of $a_1$, so that it is uniform. Furthermore, we also rescale $a_2$ so that 
$\|a_2\|\le 2^{c_1 \zeta^\ell k}$ and $\alpha(a_2)<2^{-\delta k}$ with some fixed $\delta>0$ for all $\alpha\in\Phi_1\backslash \{\alpha_2\}$. Then 
\begin{equation}
\label{eq:a12}
\|a_1a_2\|\le \|a_1\| \|a_2\|\le 2^{2c_1\zeta^\ell k},
\end{equation}
and 
\begin{equation}
\label{eq:a12_0}
\alpha_2(a_1a_2)|s_{\alpha_2}|=1, \quad \alpha(a_1a_2)|s_{\alpha}|< 2^{-\delta k} \hbox{ for all $\alpha\in\Phi_1\backslash \{\alpha_2\}$.}
\end{equation}
By Lemma \ref{l:projection}, there exists $b_2\in B_1$ such that 
\begin{equation}
\label{eq:abcd}
\|(a_1a_2)b_2^{-1}\|\le e^{O(q^d)}.
\end{equation}
It follows from \eqref{eq:a12} and \eqref{eq:L1} that
\begin{equation}
\label{eq:b_20}
\|b_2\|\le 2^{2c_1\zeta^\ell k} e^{O(q^d)}=2^{(2c_1\zeta^\ell+o(1)) k}.
\end{equation}
We have  
$$
h_3:=b_2 h_2 b_2^{-1} =\prod_{\alpha\in \Phi_1} u_\alpha(r_\alpha),
$$
where $r_\alpha =\alpha(b_2)s_\alpha$. By \eqref{eq:a12_0}, \eqref{eq:abcd} and \eqref{eq:L1}, 
$$
|r_{\alpha_2}|\ge e^{-O(q^d)}, \quad 
|r_{\alpha}|\le 2^{-\delta k}e^{O(q^d)}=2^{-(\delta-o(1)) k} \hbox{ for all $\alpha\in \Phi_1\backslash \{\alpha_2\}$.}
$$
Arguing as in \eqref{eq:prod}, we deduce that 
\begin{equation}
\label{eq:prod2}
h_3=v_2 u_{\alpha_2}(r_{\alpha_2})\quad\quad \hbox{where}\quad v_2=e+O(2^{-(\delta-o(1)) k}).
\end{equation}

Let us assume that $r_{\alpha_2}>0$ since the other case can be treated similarly.
By Lemma \ref{l:quant}, for every $0<\ve_1<\ve_2<1$, there exists positive $t_+=O_{\pi(y_0'')}(1)=O_{x_0}(1)$
such that 
\begin{equation}
\label{eq:eeee}
u_{\alpha_2}(t_+)y_0''\in \mathcal{W}(\vartheta,\ve_1,\ve_2),
\end{equation}
where $\vartheta=O_{\pi(y_0'')}(1)=O_{x_0}(1)$.
By Proposition \ref{p:dioph alpha},
for every $M\ge 1$, there exists $b_3\in B_1$ such that
\begin{equation}
\label{eq:ddd0}
|\alpha_2(b_3)-t_+/r_{\alpha_2}|\ll q^d (t_+/r_{\alpha_2})M^{-1}\ll q^d r_{\alpha_2}^{-1} M^{-1} ,
\end{equation}
and
\begin{equation}
\label{eq:b300}
\|b_3\|\le e^{O(|\log (t_+/r_{\alpha_2})|M^{\eta+1})}= e^{O(q^{d}M^{\eta+1})}.
\end{equation}
Since in the next step we will apply Lemma \ref{l:quant1} with $a=b_3$, $y=u_{\alpha_2}(r_{\alpha_2})y_0'$ and $y'=v_2 y$, we have to take
$b_3$ so that
\begin{equation}
\label{eq:b_3}
\|b_3\|< \|e-v_2\|^{-1/(2d)},
\end{equation}
To arrange this (see \eqref{eq:prod2}), we can take $b_3$ of size 
\begin{equation}
\label{eq:b_3_00}
\|b_3\|\le \omega_1 2^{(\delta-o(1)) k/(2d)}
\end{equation}
with sufficiently small $\omega_1>0$.
Moreover, in the next step, 
we will apply Lemma \ref{l:quant1} with $a=b_3b_2$, $y=h_2y_0''$ and $y'=v_1 y$.
Hence, we also have to arrange that
\begin{equation}
\label{eq:b_3_1}
\|b_3b_2\|< \|e-v_1\|^{-1/(2d)}.
\end{equation}
For this purpose, we can choose $b_3$ such that 
\begin{equation}
\label{eq:b_3_110}
\|b_3b_2\|\le \omega_2 2^{\zeta^{\ell-1} k/(2d)}
\end{equation}
with sufficiently small $\omega_2>0$ (see \eqref{eq:prod}).
We pick the parameter $\zeta$ so that 
\begin{equation}
\label{eq:zeta}
\zeta<1/(8c_1d).
\end{equation}
Then in view of \eqref{eq:b_20}, $\|b_2\|\le 2^{\zeta^{\ell-1} k/(4d)}$ for sufficiently large $T$. Hence, if take $b_3$ of size
\begin{equation}
\label{eq:b_3_11}
\|b_3\|\le \omega_2 2^{\zeta^{\ell-1} k/(4d)},
\end{equation}
then \eqref{eq:b_3_110} holds.
Now let us take $M$ such that 
\begin{equation}
\label{eq:m}
q^{d}M^{\eta+1}\le \delta' k\quad\hbox{ with $\delta'>0$.}
\end{equation}
If $\delta'$ is sufficiently small, 
then \eqref{eq:b300} implies that \eqref{eq:b_3_00} and \eqref{eq:b_3_11} hold,
and consequently \eqref{eq:b_3} and \eqref{eq:b_3_1} hold.
Since $M$ in \eqref{eq:m} can be chosen to satisfy $M\gg (k/q^{d})^{1/(\eta+1)}$, it follows from 
(\ref{eq:ddd0}) that
\begin{equation}
\label{eq:lll}
|\alpha_2(b_3)r_{\alpha_2}-t_+|\ll \frac{q^{d+d/(\eta+1)}}{k^{1/(\eta+1)}}\le
\theta,
\end{equation}
where $\theta:=\frac{L^{d(\eta+2)/(\eta+1)}}{k^{1/(\eta+1)}}$.
In order for this to give a non-trivial estimate, we have to require that
the parameter $L$ is chosen so that
\begin{equation}
\label{eq:L2}
L^{d(\eta+2)}=o(k)\quad \hbox{as $T\to \infty$,}
\end{equation}
which is a strengthening of our previous assumption \eqref{eq:L1}.
We assume that \eqref{eq:L2} holds. Then, in particular, $\theta\to 0$ as $T\to \infty$.

Now in \eqref{eq:eeee} we choose 
$\ve_1 =c_1\,\theta$ and $\ve_2 =c_2\,\theta$
with some $0<c_1<c_2$. 
Then since $u_{\alpha_2}(t_+)y_0''\in \mathcal{W}(\vartheta,\ve_1,\ve_2)$, there exists $z\in \Lambda_{y_0''}$ such that 
\begin{equation}
\label{eq:baba}
\|u_{\alpha_2}(t_+)z\|< \vartheta\quad\hbox{and}\quad  c_1\,\theta<|N(u_{\alpha_2}(t_+)z)|<c_2\,\theta.
\end{equation}
It follows from \eqref{eq:lll} that 
$$
u_{\alpha_2}(\alpha_2(b_3)r_{\alpha_2})z=u_{\alpha_2}(t_+)z+O(\theta \vartheta).
$$
Hence, we conclude that the vector $u_{\alpha_2}(\alpha_2(b_3)r_{\alpha_2})z$
also satisfies bounds of the form \eqref{eq:baba} (provided that the constants $c_1$ and $c_2$ are sufficiently large), Namely, we deduce that
$$
\|u_{\alpha_2}(\alpha_2(b_3)r_{\alpha_2})z\|< 2\vartheta
$$
if $T$ is sufficiently large, and 
$$
c^{(1)}_1\,\theta<|N(u_{\alpha_2}(\alpha_2(b_3)r_{\alpha_2})z)|<c^{(1)}_2\,\theta
$$
for some $c^{(1)}_1, c^{(1)}_2>0$.
Thus it follows that 
$$
b_3 u_{\alpha_2}(r_{\alpha_2})y_0''= u_{\alpha_2}(\alpha_2(b_3)r_{\alpha_2})y_0''\in \mathcal{W}(2\vartheta,c_1^{(1)}\,\theta,c_2^{(1)}\,\theta).
$$
Since \eqref{eq:b_3} hold, we can apply Lemma \ref{l:quant1} with $a=b_3$ and  $y=u_{\alpha_2}(r_{\alpha_2})y_0'$ to deduce that
$$
b_3b_2  h_2 y_0''=b_3  h_3 y_0''=b_3 v_2 u_{\alpha_2}(r_{\alpha_2})y_0''
\in \mathcal{W}(6\vartheta,c_1^{(2)}\,\theta,c_2^{(2)}\,\theta)
$$
for some $c_1^{(2)},c_2^{(2)}>0$. Since \eqref{eq:b_3_1} holds,
we can apply again Lemma \ref{l:quant1}  with  $a=b_3b_2$ and $y=h_2y_0''$ to conclude that
$$
(b_3b_2)b  h y_0''= (b_3b_2)  h_1 y_0''=(b_3b_2)  v_1h_2 y_0''\in 
\mathcal{W}(18\vartheta,c_1^{(3)}\,\theta,c_2^{(3)}\,\theta)
$$
for some $c_1^{(3)},c_2^{(3)}>0$.
Finally, combining this with \eqref{eq:first}, we deduce that
for $a:=a_0b_3 b_2 b$, we have
$$
ay=(b_3b_2)b  h y_0''\in 
\mathcal{W}(18\vartheta,c_1^{(3)}\,\theta,c_2^{(3)}\,\theta).
$$
We note that
$$
\|a\|\le \|a_0\|\, \|b_3 b_2\|\, \|b\|\ll e^{\nu T}\, 2^{\zeta^{\ell-1}k/(2d)}\, T^{O(1)}\le e^{(\nu+1)T}
$$
for sufficiently large $T$. 
This proves that 
\begin{equation}
\label{eq:case2}
A(e^{ (\nu+1)T})y\cap \mathcal{W}\left(18\vartheta, c_2^{(3)}\frac{L^{d(\eta+2)/(\eta+1)}}{k^{1/(\eta+1)}}\right)\ne \emptyset
\end{equation}
for sufficiently large $T$.

\vspace{0.4cm}

\noindent \underline{Case 3:} {\it $w$ is close to a torsion point with large period.}
We consider the set
$$
\mathcal{D}_{x_0}(k,L):=\{z\in V/\Delta_{x_0}:\, \|z-w_0\|<q^{-k}\hbox{ for some $w_0\in q^{-1}\Delta_{x_0}$ and $q\ge L$} \}.
$$
Let $\hbox{diam}^*(S)$ denote the supremum of diameters of the connected components of the set $S$.
We observe that 
\begin{equation}\label{eq:d1}
\hbox{diam}^*(\mathcal{D}_{x_0}(k,L))\le \sum_{q\ge L}\sum_{w_0\in q^{-1}\Delta_0} 2q^{-k}\ll \sum_{q\ge L} q^{d-k}\ll L^{d+1-k}.
\end{equation}
We recall that by \eqref{eq:asss} either $w=0$ or $\|w\|\ge T^{-\beta}$.
Hence, according to our assumption in Case 3, we must have $\|w\|\ge T^{-\beta}$.
Without loss of generality, let us assume that $|w_1|\ge T^{-\beta}$.
We consider the one-parameter subgroup $a(t):=\hbox{diag}(e^t,e^{-t},1,\ldots, 1)$ of $A$.
We observe that for $t\ge 0$,
$$
\|a(t)w-a(0)w\|\ge (e^t-1)|w_1|\ge t T^{-\beta}.
$$
Hence, 
\begin{equation}\label{eq:d2}
\hbox{diam}(a([0,\log (1+T^{-\beta})])w)\ge T^{-2\beta}/2.
\end{equation}
We choose the parameter $L$ so that 
\begin{equation}
\label{eq:L3}
L^{d+1-k}< \omega\, T^{-2\beta}
\end{equation}
with sufficiently small $\omega>0$.
Then comparing \eqref{eq:d1} and \eqref{eq:d2}, we deduce that there exists $a(t)$ with $\|a(t)-e\|\ll T^{-\beta}$ such that $a(t)w\notin \mathcal{D}_{x_0}(k,L)$. We replace \eqref{eq:start2} by
$$
a(t)a_0y=a(t)g y_0'
$$
where the point $y_0'$ corresponds to the grid $\Delta_{x_0}+ (a(t)g)^{-1} a(t)w$.
Hence, if we replace $a_0$ by $a(t)a_0$ and $g$ by $a(t)g$,
we obtain \eqref{eq:start2} with $w$ satisfying either the condition of Case 1 or the condition of Case 2. Hence, we can reduce the proof to the situations considered in Cases 1 or 2.
This reduction is possibly provided that \eqref{eq:L3} holds,
so that we can choose 
\begin{equation}
\label{eq:L4}
L\ll T^{2\beta/(k-d-1)}.
\end{equation}
Then \eqref{eq:case2} becomes
\begin{equation}
\label{eq:case2_1}
A(e^{ (\nu+1)T})y\cap \mathcal{W}\left(18 \vartheta, O\left(\frac{T^{\frac{2\beta d(\eta+2)}{(\eta+1)(k-d-1)}}}{k^{1/(\eta+1)}}\right)\right)\ne \emptyset.
\end{equation}

\vspace{0.5cm}

Finally, we complete the proof of the theorem by combining the estimates obtained in Cases 1 and 2, and optimising the parameter $k=k(T)$. We recall that $k$ is required to satisfy \eqref{eq:k1} with $R=e^{(\nu+1)T}$, so that $k=\frac{\log T}{\rho(T)}$ for some $\rho(T)\to \infty$. Moreover, $k$ has to satisfy \eqref{eq:L2}.
Since we are assuming that $k\to\infty$, it follows from \eqref{eq:L4} that \eqref{eq:L2} holds provided that 
$T^{\delta''/k}=o(k)$ with some $\delta''>2\beta d (\eta+3)$. Therefore,
the parameter $k$ has to be chosen so that
$$
\frac{\delta''}{k}\log T-\log k =\delta''\, \rho(T) -\log\log T+\log \rho(T) \to -\infty.
$$
Hence, we can take $\rho(T)=\kappa \log\log T$ for sufficiently small $\kappa>0$.
Then \eqref{eq:k1} holds with $s(R)\gg \log_{(4)} R$, where $R=e^{(\nu+1)T}$.
Hence, \eqref{eq:RRR1} implies that for sufficiently large $R$,
$$
A(R)y\cap \mathcal{W}(3,O((\log_{(5)} R	)^{-d\sigma}))\ne \emptyset.
$$
This proves the proposition in Case 1. 

Also with this choice of $\rho(T)=\kappa \log\log T$, provided that $\kappa$ is chosen sufficiently small,
we obtain that for some $\omega_0>0$,
\begin{align*}
\log\left(\frac{T^{\frac{2\beta d(\eta+2)}{(\eta+1)(k-d-1)}}}{k^{1/(\eta+1)}}\right)&\ll \omega_0\frac{\log T}{k}-\log k=\omega_0 \rho(T)-\log\log T+\log \rho(T)\\
&\ll -\log\log T.
\end{align*}
Hence, \eqref{eq:case2_1} implies that for some $\delta>0$,
$$
A(e^{(\nu+1)T})y\cap \mathcal{W}(18 \vartheta,O((\log T)^{-\delta}))\ne \emptyset,
$$ 
which proves the proposition in Case 2 and completes the proof of the theorem.
\end{proof}

\begin{proof}[Proof of theorem \ref{th:main0} ]
We consider the family of grids
$$
\Lambda(u,v,\alpha,\beta)\defeq\{{}^t(x,xu-y-\alpha,xv-z-\beta):\, x,y,z\in \ZZ\}.
$$
We note that the lattices $\Lambda(u,v, 0,0)$ with $(u,v)\in\RR^2$ are precisely
the lattices in the orbit $U\ZZ^3$, where $U$ is the expanding horospherical subgroup
defined in \eqref{eq:uuu}.
Hence, it follows from Proposition \ref{p:rec2} that for almost every $(u,v)\in \R^2$,
the lattice $\Lambda(u,v, 0,0)$ satisfies the assumption of Proposition \ref{p:supp}. 
Therefore, by this proposition, the grid
$\Lambda(u,v, \alpha,\beta)$ with arbitrary $\alpha,\beta\in \R$ has property (WR)
with $h(T)= (\log_{(5)} T)^{\delta}$. Thus, by Proposition \ref{p:dyn},
the grid $\Lambda(u,v, \alpha,\beta)$ is $h(T)$-multiplicatively approximable.
This implies that there exists a sequence $v_n={}^t(q_n, q_n u-r_n-\alpha, q_n v -s_n-\beta)$ with $q_n,r_n,s_n\in\ZZ$ such that
$v_n\to\infty$, and 
$$
0< (\log_{(5)} \|v_n\|)^{\delta} |q_n| |q_n u-r_n-\alpha||q_n v-s_n-\beta|<1.
$$
In particular, it follows that $0 \ne |q_n| |q_n u-r_n-\alpha||q_n v-s_n-\beta|\to 0$.
Since $r_n,s_n\in\ZZ$, this can only happen if $|q_n|\to\infty$. We observe that there exist $c_1,c_2>0$ such that
$\|v_n\|\ge c_1|q_n|-c_2$, so that 
$$
(\log_{(5)} (c_1|q_n|-c_2))^{\delta} |q_n| \left<q_n u-\alpha\right>
\left<q_n v-\beta\right>\le 
(\log_{(5)} \|v_n\|)^{\delta} |q_n| |q_n u-r_n-\alpha||q_n v-s_n-\beta|.
$$
Hence, we deduce that 
$$
\liminf_{|q|\to\infty} (\log_{(5)} |q|)^{\delta} |q| \left<q u-\alpha\right>
\left<q v-\beta\right>\le 1,
$$
and Theorem \ref{th:main0} follows.
\end{proof}

\begin{proof}[Proof of theorem \ref{th:main}]
The proof of Theorem \ref{th:main} is similar.
We note that it is sufficient to prove the theorem for almost every unimodular lattice $\Delta$
because general lattices can be obtained by rescaling.
By Proposition \ref{p:rec1}, almost every $x\in X$ satisfies the 
assumptions of Proposition \ref{p:supp}.
Let $\Delta$ denote the lattice corresponding to $x$ for which Proposition \ref{p:supp} applies.
Then combining Proposition \ref{p:supp} and Proposition \ref{p:dyn},
we deduce that for every $w\in \R^d$, the grid $\Delta+w$
is $h(T)$-multiplicatively approximable with $h(T)=(\log_{(5)} T)^{\delta}$.
This completes the proof of the theorem.
\end{proof}

\end{document}